\documentclass[12pt,reqno]{amsart}
\usepackage{lipsum}
\usepackage{a4wide}
\usepackage{amssymb} 
\usepackage{amsmath}
\usepackage{amsthm} 
\usepackage{tikz-cd} 
\usepackage{amscd}
\usepackage{comment}
\usepackage{enumitem}
\usepackage{bbold}  
\theoremstyle{plain}
\usepackage[margin=2.4cm]{geometry}
\usepackage[strings]{underscore}
\usepackage{url}
\usepackage{hyperref}
\numberwithin{equation}{section}

\newcommand{\ind}{\operatorname{Ind}}

\newcommand{\res}{\operatorname{Res}}

\newtheorem{theorem}{Theorem}[section]
\newtheorem{corollary}[theorem]{Corollary} 
\newtheorem{lemma}[theorem]{Lemma}
\newtheorem{remark}[theorem]{Remark}
\newtheorem{proposition}[theorem]{Proposition}

\setlength{\parskip}{.2em}
\setcounter{tocdepth}{1}
\title{Twisted Jacquet modules: a conjecture of D. Prasad}
\author{Santosh Nadimpalli}
\address{Department of Mathematics and Statistics, Indian Institute of Technology Kanpur, Kanpur - 208016, India} 
\email{nsantosh@iitk.ac.in}
\author{Mihir Sheth}
\address{Department of Mathematics, Indian Institute of Science, Bangalore - 560012, India}
\email{mihirsheth@iisc.ac.in}
\begin{document}

\begin{abstract}

In this note, we study the twisted Jacquet modules of sub-quotients of
principal series representations of ${\rm GL}_2(D)$ where $D$ is a
division algebra over a non-archimedean local field $F$. We begin with a proof of a conjecture of D. Prasad on twisted Jacquet modules of Speh representations of ${\rm GL}_2(D)$ when $D$
is the quaternionic division algebra. For arbitrary division algebras $D$ over $F$, we focus on depth-zero principal series. We compute the dimensions of twisted Jacquet modules of
generalized Speh representations and explicitly investigate their structure as $D^{\times}$-representations in the depth-zero situation.  
\end{abstract}

\maketitle
\section{Introduction}

The multiplicity one property of the space of Whittaker models has been a central result in the representation theory of quasi-split reductive groups over local fields. The dimension of the space of generalized Whittaker models is a useful invariant to measure the growth of a representation and can be greater than one for general $p$-adic reductive groups. It is also intricately related to some well-known branching problems. Nonetheless, the spaces of Whittaker models seem to be far from being well understood, especially for non-quasi-split $p$-adic groups. In this note, we consider the group ${\rm GL}_2(D)$ where $D$ is a division algebra over a non-archimedean local field $F$ and study non-degenerate Whittaker models, also known as the twisted Jacquet modules, of smooth irreducible representations of ${\rm GL}_2(D)$. The twisted Jacquet modules for supercuspidal representations of ${\rm GL}_2(D)$ were first studied by Raghuram and Prasad (see \cite[Propostion 1]{dipendragermexpansions}). Here, we focus on irreducible sub-quotients of principal series representations and describe the structure of their twisted Jacquet modules explicitly in the depth-zero situation. 


To fix some notations, let $\tau$ be an
irreducible smooth representation of $D^\times$. Let $\nu_\tau$ be an
unramified character of $D^\times$ such that the normalised induction
$\tau\nu_\tau^{-1/2}\times \tau\nu_\tau^{1/2}$ is reducible and the
generalised Steinberg representation ${\rm St}(\tau)$ occurs as the
quotient. The irreducible sub-representation of
$\tau\nu_\tau^{-1/2}\times \tau\nu_\tau^{1/2}$, denoted by
${\rm Sp}(\tau)$, is the Speh representation associated with $\tau$. Let $B$ be the minimal parabolic subgroup of ${\rm GL}_2(D)$ consisting of upper triangular matrices with
unipotent radical $N$. Let $\psi:F\rightarrow \mathbb{C}^\times$ be a non-trivial additive character on $F$, viewed as the character on $N$ via
\[ \psi\left(\begin{pmatrix}1&x\\0&1\end{pmatrix}\right)=\psi({\rm Tr}_{D/F}(x)),\]
where ${\rm Tr}_{D/F}$ is the reduced trace. The twisted Jacquet
module of a smooth representation $(\pi,V)$ of ${\rm GL}_2(D)$ is the space $\pi_{N, \psi}$ of $\psi$-coinvariants of $N$ in $V$, and is naturally a representation of $D^\times$.

Based on a multiplicity one result of Rallis, D. Prasad made a conjecture in \cite{pr01} for the quaternionic division algebra $D$ which predicts that ${\rm Sp}(\tau)_{N, \psi}$ is a character and precisely describes this character as a $D^{\times}$-representation. We first prove this conjecture (see Theorem \ref{dpthm}): 
\begin{theorem}\label{introthm1}
	Let $D$ be the quaternionic division algebra and $\tau$ be a smooth irreducible representation of $D^\times$ of dimension $>1$. Then the $D^\times$-representation ${\rm Sp}(\tau)_{N, \psi}$ is isomorphic to $\omega_\tau\circ {\rm Nr}_{D/F}$ where $\omega_\tau$ is the central character of $\tau$ and ${\rm
		Nr}_{D/F}$ is the reduced norm map of $D$.  
\end{theorem}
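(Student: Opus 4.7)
The plan is to apply the twisted Jacquet functor $(-)_{N,\psi}$ to the defining short exact sequence
\[ 0 \longrightarrow {\rm Sp}(\tau) \longrightarrow \tau\nu_\tau^{-1/2}\times \tau\nu_\tau^{1/2} \longrightarrow {\rm St}(\tau) \longrightarrow 0 \]
and exploit the exactness of this functor in order to realise ${\rm Sp}(\tau)_{N,\psi}$ as a sub-$D^\times$-representation of the twisted Jacquet module of the full principal series.

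I would first compute this middle term via the Bernstein--Zelevinsky geometric lemma for the Bruhat decomposition $G = B \sqcup BwB$. Only the open cell contributes, since $\Psi|_{N}$ is non-trivial and the closed cell yields an $N$-trivial piece that is killed; the open cell gives
\[ (\tau\nu_\tau^{-1/2}\times \tau\nu_\tau^{1/2})_{N,\psi} \cong \tau\otimes\tau \]
as a representation of the diagonal $D^\times \subset M = D^\times \times D^\times$, which is precisely the stabiliser of $\Psi$ in $M$ (conjugation invariance of ${\rm Tr}_{D/F}$ forces $a=b$ for $(a,b)\in M$ to fix $\Psi$). On this diagonal the unramified twists $\nu_\tau^{\pm 1/2}$ cancel and the modular character $\delta_B^{1/2}$ is trivial, which accounts for the clean form. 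Thus ${\rm Sp}(\tau)_{N,\psi}$ sits inside the $4$-dimensional $D^\times$-module $\tau\otimes\tau$, whose central character is $\omega_\tau^2$.

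The next step is to pin down the correct sub-representation. Since $\tau$ is an irreducible $2$-dimensional depth-zero representation of $D^\times$ with $D$ quaternionic, it arises by Clifford-type induction from a non-Galois-stable character $\chi$ of the residue field $\mathbb{F}_{q^2}^\times$ of $D$; Mackey's formula decomposes $\tau\otimes\tau$ as a $D^\times$-representation into a $2$-dimensional piece together with two characters extending $\chi^{q+1} = \bar\omega_\tau\circ{\rm Nm}_{\mathbb{F}_{q^2}/\mathbb{F}_q}$, one of which is $\omega_\tau\circ{\rm Nr}_{D/F}$. To identify ${\rm Sp}(\tau)_{N,\psi}$ with this particular summand, I would use the depth-zero Iwahori-theoretic framework set up in the introduction: compute the $(I(1),\psi_0)$-isotypic component of the principal series from the Iwasawa decomposition, check that the canonical map to the twisted Jacquet module is injective on this component and lands in $\tau\otimes\tau$, and use the realisation of ${\rm Sp}(\tau) \subset \tau\nu_\tau^{-1/2}\times \tau\nu_\tau^{1/2}$ as the image of the standard intertwining operator from the opposite ordering of the principal series to single out which vectors belong to ${\rm Sp}(\tau)^{I(1),\psi_0}$.

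The main obstacle is this last identification: distinguishing ${\rm Sp}(\tau)_{N,\psi}$ from the other irreducible summands of $\tau\otimes\tau$, in particular from the other unramified twist of $\omega_\tau\circ{\rm Nr}_{D/F}$. I expect this to reduce to a concrete computation involving the action of a uniformiser $\varpi_D$ of $D$ on the $(I(1),\psi_0)$-eigenspace, combined with the intertwining operator at the level of the residue field group; once the $\varpi_D$-eigenvalue is pinned down, the correct character is forced and the sequence $0 \to {\rm Sp}(\tau)_{N,\psi} \to \tau\otimes\tau \to {\rm St}(\tau)_{N,\psi} \to 0$ identifies ${\rm Sp}(\tau)_{N,\psi}$ as $\omega_\tau\circ{\rm Nr}_{D/F}$.
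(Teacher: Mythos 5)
Your proposal is correct but takes a genuinely different route from the paper's proof of this statement. The paper proves Theorem~\ref{dpthm} by transferring $\mathrm{Sp}(\tau)$ via the Jacquet--Langlands correspondence to the representation $\langle\Delta\rangle$ of $\mathrm{GL}_4(F)$, comparing germ expansions (using Zelevinsky and Moeglin--Waldspurger on the Whittaker support of $\langle\Delta\rangle$) to show that $\dim\mathrm{Sp}(\tau)_{N,\psi}=1$, and then invoking a theorem of Gan--Takeda on Shalika models of $\mathrm{GL}_2(D)$ to identify the $D^\times$-action as $\omega_\tau\circ\mathrm{Nr}_{D/F}$; that argument applies to any smooth irreducible $\tau$ of dimension $>1$, not only depth-zero ones. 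Your plan --- embed $\mathrm{Sp}(\tau)_{N,\psi}$ into $\tau\otimes\tau$ via the geometric lemma, pass to $(I(1),\psi_0)$-semi-invariants, decompose $\tau\otimes\tau$ by Mackey, and isolate the correct one-dimensional summand via an explicit computation of the $\varpi_D$-action through a residue-field intertwining operator and Gauss sum --- coincides with the paper's alternate proof in Section 4, culminating in Theorem~\ref{explicit_structure}, with Theorem~\ref{main} and Proposition~\ref{semiinvariantsforspst} supplying precisely the injectivity statement you flag as needing care. The trade-off is this: the germ-expansion proof is short and depth-free but uses the Gan--Takeda Shalika-model result as a black box, while your route is more explicit and self-contained, and generalises to arbitrary $D$ and all tame $\tau$ of any dimension $d\mid n$, at the price of the depth-zero restriction. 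You have correctly located both delicate steps (injectivity of the $(I(1),\psi_0)$-semi-invariants into the twisted Jacquet module, and the Gauss-sum computation that pins down the $\varpi_D$-eigenvalue and hence distinguishes the two unramified twists of $\omega_\tau\circ\mathrm{Nr}_{D/F}$); these are exactly where the technical work lies in Section~4.
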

The above theorem is proved by comparing the germ expansions of representations of ${\rm GL}_2(D)$ and ${\rm GL}_{4}(F)$ which correspond to each other under the Jacquet--Langlands correspondence. This gives
that the twisted Jacquet module ${\rm Sp}(\tau)_{N, \psi}$ is one-dimensional. The explicit action of $D^\times$ on this one-dimensional space is given by  a result of Gan and Takeda on Shalika models of
${\rm GL}_2(D)$. The idea of comparing local character expansions of representations to study twisted Jacquet modules for non-quasi-split groups has been recently used by Y. Cai to construct a family of Speh representations having unique models of degenerate type \cite{cai23}.

For an arbitrary division algebra $D$, we focus on depth-zero principal series of ${\rm GL}_2(D)$. Let $I$ be the standard Iwahori subgroup and
$I(1)$ be the pro-$p$ radical of $I$. Assume that $\psi$
restricts to a non-trivial character $\psi_0$ on $I(1)\cap N$. For any smooth representation $(\sigma, W)$ of $I$, let
$W^{I(1), \psi_0}$ denote the space $\{ w\in W:
\sigma(g)w=\psi_0(g)w, \forall g\in I(1)\}$. We prove the following theorem which can be considered as the analogue of the result of Moy and Prasad on the compatibility of $I(1)$-invariants with Jacquet modules (see Theorem \ref{main}).
\begin{theorem}
  Let $\tau_1$ and $\tau_2$ be two irreducible depth-zero
  representations of $D^\times$. Then the natural
  map
  \[(\tau_1\times \tau_2)^{I(1), \psi_0}\rightarrow (\tau_1\times
    \tau_2)_{N, \psi}\] is an isomorphism.
\end{theorem}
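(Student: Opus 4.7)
The plan is to establish that both sides have dimension $(\dim \tau_1)(\dim \tau_2)$ and to show the natural map is injective; the two facts will then force the isomorphism. Set $\rho := \tau_1 \boxtimes \tau_2$ and $V = \mathrm{Ind}_B^G(\rho)$, normalised so that $V = \tau_1 \times \tau_2$. The central geometric input is that $G = B \cdot \mathrm{GL}_2(\mathfrak{o}_D)$ reduces the double-coset set $B \backslash G / I(1)$ to $B(k_D) \backslash \mathrm{GL}_2(k_D) / U(k_D) = \{[e], [w]\}$, where $k_D$ is the residue field of $D$ and $w$ is the non-trivial Weyl element.

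First I would compute $V^{I(1), \psi_0}$ using the Mackey--Frobenius formula
\[
V^{I(1), \psi_0} \;\cong\; \bigoplus_{g \in \{e, w\}} \mathrm{Hom}_{I(1) \cap g^{-1} B g}\bigl(\psi_0,\, \rho^{g}\bigr).
\]
The $[e]$-summand vanishes because $\rho|_{N \cap I(1)}$ is trivial while $\psi_0|_{N \cap I(1)} = \Psi$ is non-trivial. In the $[w]$-summand, the relevant subgroup $I(1) \cap w^{-1} B w$ is the lower-triangular part of $I(1)$: here $\psi_0$ is trivial (the upper-right entry is zero) and $\rho^w$ is trivial by the depth-zero hypothesis on the $\tau_i$, so this contributes $V_{\tau_1} \otimes V_{\tau_2}$. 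Therefore $\dim V^{I(1), \psi_0} = (\dim \tau_1)(\dim \tau_2)$.

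Next I would obtain the upper bound $\dim V_{N, \psi} \leq (\dim \tau_1)(\dim \tau_2)$ from the Bruhat filtration $0 \to V_0 \to V \to \rho \to 0$, where $V_0 \cong C_c^\infty(N, V_\rho)$ consists of functions supported on the big Bruhat cell. Since $N$ acts trivially on $\rho$ and $\psi$ is non-trivial, $\rho_{N, \psi} = 0$; right-exactness of the twisted Jacquet functor then realises $V_{N, \psi}$ as a quotient of $(V_0)_{N, \psi}$, which identifies with $V_\rho$ via the Fourier-type map $\phi \mapsto \int_N \phi(n)\psi^{-1}(n)\, dn$.

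The final and hardest step is injectivity of the natural map. A non-zero $f \in V^{I(1), \psi_0}$ is supported on $BwI(1)$ and determined by the vector $\xi := f(w)$. The crucial geometric observation is that for every $x \in D$ with $x^{-1} \in \mathfrak{p}_D$ the element $w \begin{pmatrix} 1 & x \\ 0 & 1 \end{pmatrix}$ lies in the \emph{small} cell $BI(1)$: explicitly,
\[
\begin{pmatrix} 0 & 1 \\ 1 & x \end{pmatrix} \;=\; \begin{pmatrix} x^{-1} & 1 \\ 0 & x \end{pmatrix} \cdot \begin{pmatrix} -1 & 0 \\ x^{-1} & 1 \end{pmatrix},
\]
and the right factor lies in $(B \cap K) \cdot I(1)$ because $x^{-1} \in \mathfrak{p}_D$. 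Hence $f$ vanishes at such points, whereas $f\bigl(w \begin{pmatrix} 1 & x \\ 0 & 1 \end{pmatrix}\bigr) = \psi(\mathrm{Tr}_{D/F}(x))\, \xi$ for $x \in \mathfrak{o}_D$. For any open compact subgroup $U \subset N$ containing $N \cap I(1)$ this yields
\[
\Bigl(\int_U \psi^{-1}(u)\pi(u) f\, du \Bigr)(w) \;=\; \mathrm{vol}(\mathfrak{o}_D)\, \xi \;\neq\; 0,
\]
so $f \notin V(N, \psi)$ by the standard criterion for the twisted Jacquet kernel. The main obstacle is precisely this Iwasawa-type calculation placing $w \begin{pmatrix} 1 & x \\ 0 & 1 \end{pmatrix}$ in the small cell when $x \notin \mathfrak{o}_D$, which is what makes the twisted Fourier integral at $w$ telescope to a single non-zero contribution.
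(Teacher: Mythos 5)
Your proposal is correct and takes essentially the same route as the paper. Both arguments rest on the same three pillars: (i) computing $\dim(\tau_1\times\tau_2)^{I(1),\psi_0}=d_1d_2$ by Mackey theory after reducing to $\mathrm{GL}_2(k_D)$ (the paper routes this through $(\tau_1\times\tau_2)^{K(1)}\cong\mathrm{Ind}_I^K(\tau_1\otimes\tau_2)$, but that is the same reduction); (ii) identifying $\dim(\tau_1\times\tau_2)_{N,\psi}=d_1d_2$ (the paper quotes Prasad's isomorphism $(\tau_1\times\tau_2)_{N,\psi}\cong\tau_1\otimes\tau_2$ directly, whereas you re-derive the bound from the Bruhat filtration and the Fourier-type isomorphism on the open cell — equivalent content); and (iii) injectivity via the same Iwasawa-type identity placing $w\left(\begin{smallmatrix}1&x\\0&1\end{smallmatrix}\right)$ in the small cell $BI$ when $v_D(x)<0$, so that the twisted Fourier integral at $w$ telescopes to $\mathrm{vol}(\mathfrak{o}_D)\,f(w)\neq 0$. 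The paper packages this last step as an inductive statement $f_r(s)=f_{r-1}(s)$ together with a base-case lemma, while you compute the full integral at once; the matrix identity and the key observation ($f$ vanishes on $BI$) are the same, so this is a presentational rather than a mathematical difference.
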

Using the above theorem, we prove that the natural maps
\[{\rm Sp}(\tau)^{I(1), \psi_0}\rightarrow {\rm Sp}(\tau)_{N, \psi}\]
and 
\[{\rm St}(\tau)^{I(1), \psi_0}\rightarrow {\rm St}(\tau)_{N, \psi}\]
are isomorphisms. The results of Minguez and Secherre in \cite[\S 5.3]{ms14} on the functor
$\mathbf{K}$ describe a crucial part of the space of invariants for the first principal congruence subgroup $K(1)$. Putting together the
results of Minguez and Secherre with the above isomorphisms, we obtain for depth zero $\tau$ that
the dimension of ${\rm Sp}(\tau)_{N, \psi}$ is equal to $d(d-1)/2$
where $d$ is the dimension of $\tau$ (see Corollary \ref{dimformulae}). Note that Speh representations {\it no} longer support unique Whittaker models when $d>2$. In the case
where $d$ is odd, we show that the $D^\times$-representation ${\rm Sp}(\tau)_{N, \psi}$
is isomorphic to the exterior square representation (see Proposition \ref{oddcaseprop}). The case where $d=2$ is arithmetically more involved and we use some computations on Gauss sums to determine the explicit structure of the twisted Jacquet
module of ${\rm Sp}(\tau)$. In this context, we generalize Theorem \ref{introthm1} to arbitrary division algebras and obtain a different proof of it in the situation of the quaternionic division algebra (see Theorem
\ref{explicitstructure}). 
\\

\noindent {\it Acknowledgements}: The authors would like to thank Dipendra Prasad for his valuable
suggestions and the proof of Theorem 3.1. We also have benefited from the discussions with Guy Henniart and C. S. Rajan and we thank them for their
suggestions. The authors thank the anonymous referee for several useful comments on an earlier version of the paper. The first author thanks DST-INSPIRE for the research grant.
\section{Preliminaries}
We fix some notation and recall some facts. 
\subsection{}
Let $F$ be a non-archimedean local field of residue characteristic $p$, $\mathfrak{o}_F$ be
the ring of integers in $F$, $\mathfrak{p}_F\subseteq\mathfrak{o}_F$ be the maximal
ideal, and $\mathbb{F}_q$ be the residue field
of $F$ of cardinality $q$. Let $D$ be a central division algebra over $F$ index $n$. The maximal
order of $D$ is denoted by $\mathfrak{o}_D$ and the maximal ideal of
$\mathfrak{o}_D$ is denoted by $\mathfrak{p}_D$. For a central
simple algebra $A$ over a field $k$, the reduced norm map (resp. the reduced trace map) is
denoted by ${\rm Nr}_{A/k}$ (resp.  ${\rm Tr}_{A/k}$). Similarly, for a finite field extension $l/k$, the field norm map (resp. the field trace map) is
denoted by ${\rm Nr}_{l/k}$ (resp.  ${\rm Tr}_{l/k}$). Let $\varpi_{F}$ and $\varpi_{D}$ be the uniformizers of $F$ and $D$ respectively such that $\varpi_F=\varpi_{D}^{n}$. Then ${\rm Nr}_{D/F}(\varpi_{D})=(-1)^{n+1}\varpi_{F}$. Let $|\cdot|_{F}$ and $|\cdot|$ denote the normalized non-archimedean absolute values on $F$ and $D$ respectively such that $|\varpi_{F}|_{F}=q^{-1}$ and $|\varpi_{D}|=q^{-n}$. For
$z\in \mathbb{C}^\times$, we denote by $\mu_z$ the unramified character
of $D^\times$ which sends  $\varpi_{D}$ to $z$. 
\subsection{}
For a divisor $d$ of $n$  with $n=md$, let $F_{d}$ denote the unramified extension of $F$ of degree $d$ viewed as a subfield of $D$, and $D_{m}$ denote the centralizer of $F_{d}$ in $D$. The algebra $D_{m}$ is a central division algebra over $F_{d}$ of index $m$. Let $\theta:F_{d}^{\times}\rightarrow \mathbb{C}^{\times}$ be a tamely ramified character all whose Galois conjugates are distinct. Composing it with the reduced norm $\mathrm{Nr}_{D_{m}/F_{d}}:D_{m}^{\times}\rightarrow F_{d}^{\times}$ and extending it to $D^{\times}_{m}D(1)$ by declaring it to be trivial on $D(1)=1+\varpi_{D}\mathcal{O}_{D}$, we have a character $\tilde{\theta}:D^{\times}_{m}D(1)\rightarrow \mathbb{C}^{\times}$. Note that  $D^{\times}_{m}D(1)=\mathfrak{o}_D^{\times}\rtimes\varpi_{D}^{d\mathbb{Z}}$. Inducing $\tilde{\theta}$ to $D^{\times}$, we obtain a smooth tamely ramified irreducible $d$-dimensional representation $\mathrm{Ind}_{D^{\times}_{m}D(1)}^{D^{\times}}\tilde{\theta}$ of $D^{\times}$. All smooth tamely ramified irreducible representations of $D^{\times}$ are obtained in this fashion \cite{sz05}.
\subsection{}
Let $G$ be the group ${\rm GL}_2(D)$. Let $B\subseteq G$ be the subgroup of upper triangular matrices (the standard minimal parabolic subgroup), $N\subseteq B$ be the subgroup of upper triangular unipotent matrices (the unipotent radical of $B$), and $T\subseteq B$ be the subgroup of diagonal matrices (the Levi quotient of $B$). 
The group $D^\times$ is viewed as a subgroup of $T$ sitting diagonally in it.
We denote by $K$ the maximal compact subgroup ${\rm GL}_2(\mathfrak{o}_D)$ of $G$. Let $I$
  denote the standard Iwahori subgroup of $G$ and $K(1)$ and $I(1)$ be the
  pro-$p$ radicals of $K$ and $I$ respectively. Let $T_{0}=T\cap K$ and $s=\left(
  \begin{smallmatrix}0&1\\-1&0\end{smallmatrix}\right)$. We denote the subgroup of upper triangular (resp. unipotent) matrices of $\mathrm{GL}_{2}(\mathbb{F}_{q^{n}})$ by $B(\mathbb{F}_{q^{n}})$ (resp. $U(\mathbb{F}_{q^{n}})$). A non-trivial additive
  (smooth) character $\psi_{F}:F \rightarrow \mathbb{C}^\times$ gives
  rise to a non-trivial additive character
  $\psi=\psi_{F}\circ\text{Tr}_{D/F}$ on $D$ which is to be considered as a
  character of $N$. For a smooth representation $(\pi, V)$ of $G$, the
  space spanned by the set of vectors
  $\{\pi(n)v-\psi(n)v:v\in V, n\in N\}$ is denoted by $V(N,
  \psi)$. The twisted Jacquet module $V_{N,\psi}$ of $V$ is the
  quotient $V/ V(N, \psi)$ considered as a representation of
  ${\rm Stab}_T(\psi)= D^{\times}$. Recall that the
  Jacquet-Langlands lemma says that a vector $v\in V(N, \psi)$ if and
  only if
\[\int_{\mathcal{N}}\psi^{-1}(n)\pi(n)vdn=0,\]
for some compact open subgroup $\mathcal{N}$ of $N$ (see \cite[Lemma
2.33]{Bernsteinzelevinsky0}).  Though the notation for an element of $N$ is the same as that of the index of $D$, it should be clear from the context what it is used for.
\subsection{}
For an irreducible smooth representation $\tau$ of $D^\times$, there
exists an unramified character $\nu_\tau$ such that the normalized
principal series representation
$\tau\nu_\tau^{-1/2}\times \tau\nu_\tau^{1/2}$ of $G$ is reducible of length 2 and
has a unique square-integrable quotient, the generalized Steinberg representation, denoted by ${\rm
  St}(\tau)$. The subrepresentation $\mathrm{Sp}(\tau)$ of $\tau\nu_\tau^{-1/2}\times \tau\nu_\tau^{1/2}$ is called the generalized Speh representation. We have the following short exact sequences of $G$-representations:
\[0\longrightarrow {\rm Sp}(\tau)\longrightarrow
\tau\nu_\tau^{-1/2}\times \tau\nu_\tau^{1/2}
\longrightarrow
{\rm St}(\tau)\rightarrow 0\] and
\[0\longrightarrow {\rm St}(\tau)\longrightarrow
\tau\nu_\tau^{1/2}\times \tau\nu_\tau^{-1/2}
\longrightarrow
{\rm Sp}(\tau)\rightarrow 0.\]
The unramified character $\nu_{\tau}$ is $|\cdot|^{\frac{a(\tau)}{n}}$ where $a(\tau)$ is the length of the segment that determines the Jacquet-Langlands lift of $\tau$. We refer to Tadic for the above results \cite{tad90}. If $\tau$ is tamely ramified of dimension $d$, then $n=a(\tau)d$ and thus  $\nu_{\tau}=|\cdot|^{\frac{1}{d}}$, see \cite[Remark on page 182]{sz05}. For a principal series $\tau_{1}\times\tau_{2}$ of $G$, there is a natural isomorphism of $D^{\times}$-representations \[(\tau_{1}\times\tau_{2})_{N,\psi}\simeq\tau_{1}\otimes\tau_{2},\]
see \cite[Theorem 2.1]{pr00}. We note that $\mathrm{Sp}(\tau)_{N,\psi}\neq 0$ if and only if $\tau$ has dimension $>1$. Finally, if $H\subseteq G$ is a subgroup, then the restriction of a $G$-representation $V$ to $H$ is denoted either by $\res_{H}V$ or by $V|_{H}$.

\section{Proof of the conjecture of D. Prasad}

In a note \cite{pr01}, D. Prasad conjectured that \[\mathrm{Sp}(\tau)_{N,\psi}\simeq\omega_{\tau}\circ\mathrm{Nr}_{D/F}\] as $D^{\times}$-representations when $D$ is the quaternionic division algebra and $\tau$ is a smooth irreducible representation of $D^{\times}$ of dimension $>1$. We first prove this conjecture: 

\begin{theorem}\label{dpthm}
Let $D$ be the quaternionic division algebra over $F$ and let 
$\tau$ be a smooth irreducible representation of $D^\times$ of dimension $>1$. Then \[\mathrm{Sp}(\tau)_{N,\psi}\simeq\omega_{\tau}\circ\mathrm{Nr}_{D/F}\] as $D^{\times}$-representations.
\end{theorem}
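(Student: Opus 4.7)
The plan is to implement the two-step strategy hinted at in the introduction: use Jacquet--Langlands together with a germ expansion comparison to force $\mathrm{Sp}(\tau)_{N,\psi}$ to be one-dimensional, and then use the Shalika model of Gan--Takeda to pin down the $D^\times$-action.

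\emph{Setup via JL.} Let $\sigma=\mathrm{JL}(\tau)$ be the discrete series of $\mathrm{GL}_2(F)$ attached to $\tau$ by the local Jacquet--Langlands correspondence. By the Badulescu extension of JL to $\mathrm{GL}_m(D)$, the Speh representation $\mathrm{Sp}(\tau)$ of $\mathrm{GL}_2(D)$ corresponds to the Speh representation $u(\sigma,2)$ of $\mathrm{GL}_4(F)$; their Harish-Chandra characters match (up to sign) on conjugacy classes having matching characteristic polynomials.

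\emph{Germ comparison.} Expanding the Harish-Chandra local character near the identity,
\[\Theta_\pi(\exp X)=\sum_{\mathcal{O}}c_{\mathcal{O}}(\pi)\,\widehat{\mu}_{\mathcal{O}}(X),\]
the matching of characters on matched regular semisimple classes in a neighborhood of $1$ propagates to an equality of germ coefficients across the norm correspondence $\mathfrak{gl}_2(D)\leftrightarrow\mathfrak{gl}_4(F)$. On the $\mathrm{GL}_4(F)$ side, the Speh $u(\sigma,2)$ has wavefront partition $(2,2)$: the coefficients on the orbits $(4)$ and $(3,1)$ vanish (no Whittaker or $(3,1)$-degenerate Whittaker model), and $c_{(2,2)}(u(\sigma,2))=1$ by the Moeglin--Waldspurger interpretation combined with the classical uniqueness of the Shalika model for Speh representations of $\mathrm{GL}_4(F)$. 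The orbit $(2,2)$ of $\mathfrak{gl}_4(F)$ is precisely the image under the norm correspondence of the orbit of $\begin{pmatrix}0&x\\0&0\end{pmatrix}$ with $x\in D^\times$. An adaptation of the Moeglin--Waldspurger dictionary (germ coefficient $=$ dimension of the associated twisted Jacquet module) to $\mathrm{GL}_2(D)$ then identifies the corresponding coefficient on the $\mathrm{GL}_2(D)$ side with $\dim\mathrm{Sp}(\tau)_{N,\psi}$, so that $\dim\mathrm{Sp}(\tau)_{N,\psi}=1$.

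\emph{Identification of the $D^\times$-action.} By the Gan--Takeda theorem on Shalika models for $\mathrm{GL}_2(D)$, the representation $\mathrm{Sp}(\tau)$ admits a non-zero Shalika functional with Shalika character
\[\begin{pmatrix}d&dx\\0&d\end{pmatrix}\longmapsto\omega_\tau\bigl(\mathrm{Nr}_{D/F}(d)\bigr)\,\psi\bigl(\mathrm{Tr}_{D/F}(x)\bigr).\]
Such a functional factors through $\mathrm{Sp}(\tau)_{N,\psi}$ and transforms under the diagonal $D^\times\subset T$ by $\omega_\tau\circ\mathrm{Nr}_{D/F}$. Combined with the one-dimensionality from the previous step, this forces $\mathrm{Sp}(\tau)_{N,\psi}\cong\omega_\tau\circ\mathrm{Nr}_{D/F}$ as $D^\times$-representations.

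The main obstacle is the germ-coefficient step: one has to transfer the Moeglin--Waldspurger dictionary from the split form $\mathrm{GL}_4(F)$ to the inner form $\mathrm{GL}_2(D)$, keep track of Haar measure normalizations on nilpotent orbits and their centralizers on both sides, and verify that the transfer factor relating the coefficient of the $(2,2)$-orbit to that of its $\mathfrak{gl}_2(D)$-preimage is exactly $1$. The vanishing of the $(4)$ and $(3,1)$ coefficients on the $\mathrm{GL}_4(F)$ side is also crucial, as otherwise their non-trivial contributions could obstruct a clean identification of the leading germ with $\dim\mathrm{Sp}(\tau)_{N,\psi}$.
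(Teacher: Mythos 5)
Your proposal follows the same two-step strategy as the paper: a Jacquet--Langlands transfer to $\mathrm{GL}_4(F)$ combined with a germ-expansion comparison to establish that $\mathrm{Sp}(\tau)_{N,\psi}$ is one-dimensional, and then the Gan--Takeda theorem on Shalika models of $\mathrm{GL}_2(D)$ to identify the $D^\times$-action as $\omega_\tau\circ\mathrm{Nr}_{D/F}$. The ``main obstacle'' you flag at the end --- transferring the germ-coefficient/twisted-Jacquet-module dictionary from $\mathrm{GL}_4(F)$ to $\mathrm{GL}_2(D)$ across the norm correspondence and checking that the transfer factor is $1$ --- is precisely what the paper handles by invoking a theorem of Prasad on germ expansions, which yields the equality $c_{\mathcal{O}}(\mathrm{Sp}(\tau))=c_{\mathcal{O}'}(\langle\Delta\rangle)$ directly with no residual normalization to verify; with that ingredient cited, your argument and the paper's coincide.
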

\begin{proof}


We first show that  $\mathrm{Sp}(\tau)_{N,\psi}$ is a character of $D^{\times}$, i.e., the space $\mathrm{Sp}(\tau)_{N,\psi}$ is one-dimensional. Denote by $\sigma$ the Jacquet-Langlands lift of $\tau$. Note that $\sigma$ is cuspidal. Consider the segment $\Delta=[\sigma|\cdot|_{F}^{-1/2}, \sigma|\cdot|_{F}^{1/2}]$ and let $\langle \Delta \rangle$ be the irreducible subrepresentation of $\mathrm{GL}_{4}(F)$ associated with the segment $\Delta$ as in \cite[Section 3]{zelevinsky2}. The Jacquet-Langlands correspondence between $G={\rm GL}_2(D)$ and ${\rm GL}_4(F)$ and its extension to the Grothendieck groups of irreducible smooth representations takes the representation ${\rm Sp}(\tau)$ to $\langle \Delta \rangle$. The coefficient of the leading term in the germ expansion of ${\rm Sp}(\tau)$, denoted by $c_{\mathcal{O}}({\rm Sp}(\tau))$, is the dimension of ${\rm Sp}(\tau)_{N, \psi}$, and we have
    $$c_{\mathcal{O}}({\rm Sp}(\tau))=c_{\mathcal{O}'}(\langle\Delta\rangle),$$
    where $\mathcal{O}'$ is the nilpotent orbit of $\mathfrak{gl}_4(F)$ corresponding to the
    partition $(2,2)$ (see \cite[Theorem 2]{dipendragermexpansions} and \cite[Theorem 1.3, 1.6, 1.7]{hv23} for positive characteristic $F$). Using \cite[Proposition 3.4]{zelevinsky2}), we get that $(2,2)$ is the maximal
     element in the Whittaker support of $\langle\Delta\rangle$. So the nilpotent orbit associated with the partition $(2,2)$
     is the maximal nilpotent orbit in the germ expansion of $\langle \Delta \rangle$
     and thus $c_{\mathcal{O}'}(\langle\Delta\rangle)$ is $1$ (see \cite[Theorem I.16 and Chapitre II, II.2]{mwdegenerate}). 

Now the fact that a $D^{\times}$-character factors through the reduced norm $\mathrm{Nr}_{D/F}$ and that $\mathrm{Sp}(\tau)_{N,\psi}\hookrightarrow(\tau_{1}\times\tau_{2})_{N,\psi}\simeq\tau_{1}\otimes\tau_{2}$ gives $\mathrm{Sp}(\tau)_{N,\psi}\simeq\chi\circ\mathrm{Nr}_{D/F}$, where $\chi$ is a character on $F^{\times}$ that is equal to $\omega_{\tau}$ up to a quadratic character. However, the result \cite[Theorem 8.6]{shalika-periodsgantakeda} of Gan and Takeda on Shalika models of Speh representations implies that $\chi=\omega_{\tau}$.
\end{proof}

\begin{remark}
Note that the above argument does not work when $D$ is not the quaternionic division algebra because the non-trivial nilpotent orbit of $\mathfrak{gl}_2(D)$ corresponds to the nilpotent orbit of $\mathfrak{gl}_{2n}(F)$
associated with the partition $(n,n)$ of $2n$, whereas the 
maximal element in the Whittaker support of $\langle \Delta \rangle$ corresponds to the partition $(2,2,\dots, 2)$ of $2n$.
\end{remark}
\section{Further results in the tame case}
To understand the structure of twisted Jacquet modules of generalized Speh representations for arbitrary division algebra, we restrict ourselves from now on to tamely ramified (depth $0$) representations. A generalization of Theorem \ref{dpthm} is obtained for an arbitrary division algebra in the tame case. 
\subsection{Dimension formulae}
Fix an additive character $\psi_{F}:F\rightarrow\mathbb{C}^{\times}$
such that $\psi_{F}$ is non-trivial on $\mathfrak{o}_F$ but trivial on
$\mathfrak{p}_F$. Then $\psi=\psi_{F}\circ\text{Tr}_{D/F}$ is
non-trivial on $\mathfrak{o}_{D}$ and trivial on
$\mathfrak{p}_{D}$. The map
$$\begin{pmatrix}a&b\\\varpi_{D}c&d\end{pmatrix}\mapsto \psi(b)$$
defines a non-trivial character $\psi_{0}$ on the group $I(1)$ factoring through $U(\mathbb{F}_{q^{n}})$. For
any smooth representation $V$ of $I(1)$, the space of
$\psi_{0}$-semi-invariants is
$$V^{I(1), \psi_{0}}=\{v\in V: gv=\psi_{0}(g)v\ \text{for all}\ g\in
I(1)\}.$$ If $V$ is a smooth $G$-representation, then we note that
$V^{I(1),\psi_{0}}$ is stable under the action of $D^{\times}$. This
is because $\psi$ is trivial on $\mathfrak{p}_{F}$ and factors through
${\rm Tr}_{D/F}$.

Now let $\tau_1$ and $\tau_2$ be two irreducible smooth
depth-zero representations of $D^\times$ of dimensions $d_{1}$ and
$d_{2}$ respectively. As vector spaces, we have \begin{align*}
(\tau_{1}\times\tau_{2})^{I(1),\psi_{0}}&=
\mathrm{Hom}_{I(1)}(\psi_{0},\tau_{1}\times\tau_{2})
\\&=\mathrm{Hom}_{I(1)}(\psi_{0},(\tau_{1}\times\tau_{2})^{K(1)})
\\&=\mathrm{Hom}_{I(1)}(\psi_{0},\mathrm{Ind}_{I}^{K}(\tau_{1}\otimes\tau_{2}))
\\&=\mathrm{Hom}_{U(\mathbb{F}_{q^{n}})}(\psi_{0},\mathrm{Ind}_{B(\mathbb{F}_{q^{n}})}^{\mathrm{GL}_{2}(\mathbb{F}_{q^{n}})}(\tau_{1}\otimes\tau_{2})).
\end{align*}
Thus the space $(\tau_{1}\times\tau_{2})^{I(1),\psi_{0}}$ has dimension
$d_{1}d_{2}$. Also, the space $(\tau_1\times \tau_2)_{N, \psi}$ has dimension $d_{1}d_{2}$ because $(\tau_{1}\times\tau_{2})_{N,\psi}\simeq\tau_{1}\otimes\tau_{2}$ by \cite[Theorem 2.1]{pr00}. In fact, we now show that these two $D^{\times}$-representations are naturally isomorphic to each other:
\begin{theorem}\label{main}
  The restriction of the natural map $\tau_1\times \tau_2\rightarrow (\tau_1\times
  \tau_2)_{N, \psi}$ to the subspace $(\tau_1\times \tau_2)^{I(1), \psi_{0}}$:
  \begin{equation*}
    (\tau_1\times \tau_2)^{I(1), \psi_{0}}\rightarrow (\tau_1\times
    \tau_2)_{N, \psi}\end{equation*} is an isomorphism of $D^{\times}$-representations. 
\end{theorem}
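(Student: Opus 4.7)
The strategy is to combine a Mackey-type decomposition of $(\tau_1\times\tau_2)|_I$ with the Jacquet--Langlands integral criterion recalled in Section 2. Since that section gives $\dim(\tau_1\times\tau_2)_{N,\psi}=\dim(\tau_1\otimes\tau_2)=d_1 d_2$, it suffices to prove that (i) $\dim(\tau_1\times\tau_2)^{I(1),\psi_0}=d_1 d_2$ and (ii) the natural map is injective; $D^\times$-equivariance is automatic, as conjugation by the diagonal $D^\times\subset T$ normalizes $I(1)$ and preserves $\psi_0$ by the conjugation-invariance of $\mathrm{Tr}_{D/F}$.

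For (i), the Bruhat decomposition $G=BI\sqcup BwI$ and Mackey's formula give, as $I$-representations,
\[(\tau_1\times\tau_2)|_I\;\cong\; \ind_{B\cap I}^I V \;\oplus\; \ind_{\bar{B}\cap I}^I V^w, \qquad V=\tau_1\otimes\tau_2.\]
On the first summand any $(I(1),\psi_0)$-semi-invariant $f$ must satisfy, for $i=\begin{pmatrix}1&b\\0&1\end{pmatrix}\in I(1)\cap(B\cap I)$ with $b\in\mathfrak{o}_D$ arbitrary, both $f(i)=i\cdot f(1)=f(1)$ (the upper unipotent acts trivially through the induction) and $f(i)=\psi_0(i)f(1)=\psi(b)f(1)$; as $\psi|_{\mathfrak{o}_D}$ is nontrivial this forces $f(1)=0$, whence $f\equiv 0$ since $I=(B\cap I)\cdot I(1)$. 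On the second summand, each $v_0\in V$ extends uniquely to a semi-invariant $\tilde{f}$ via $\tilde{f}(hi')=\psi_0(i')\,h\cdot v_0$ for $h\in\bar{B}\cap I$, $i'\in I(1)$; the compatibility check on the overlap $(\bar{B}\cap I)\cap I(1)$ reduces to the triviality of $\tau_1\otimes\tau_2$ on $1+\mathfrak{p}_D$ (depth-zero) combined with the vanishing of $\psi_0$ on that overlap (the $(1,2)$-entries are zero there). Thus $(\tau_1\times\tau_2)^{I(1),\psi_0}\cong V$ as vector spaces, of dimension $d_1 d_2$.

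For (ii), extend $\tilde{f}$ to a function on $G$, supported on $BwI$, by $f(bwi)=\delta^{1/2}(b)\,b\cdot\tilde{f}(i)$ and $f\equiv 0$ on $BI$. For $n=\begin{pmatrix}1&x\\0&1\end{pmatrix}\in N$, the element $wn$ lies in the big cell $BwI$ if and only if $x\in\mathfrak{o}_D$, in which case $wn\in wN_0$ and $f(wn)=\tilde{f}(n)=\psi(x)v_0$; otherwise $f(wn)=0$. Consequently, for every compact open $\mathcal{N}\supseteq N_0$ of $N$,
\[ \Bigl(\int_{\mathcal{N}}\psi^{-1}(n)\pi(n)f\,dn\Bigr)(w)\;=\;\int_{N_0}\psi^{-1}(x)\psi(x)v_0\,dx\;=\;\mathrm{vol}(N_0)\cdot v_0,\]
which is nonzero whenever $v_0\neq 0$. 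By the Jacquet--Langlands lemma this forces $f\notin(\tau_1\times\tau_2)(N,\psi)$, so its image in the twisted Jacquet module is nonzero, proving injectivity. Combined with the matching dimensions, this yields the theorem. The main technical obstacle is the bookkeeping in the construction of $\tilde{f}$: one must verify that the depth-zero hypothesis precisely cancels the ambiguity arising from the overlap $(\bar{B}\cap I)\cap I(1)$, after which the injectivity is essentially a free consequence of the Bruhat-cell support of $f$.
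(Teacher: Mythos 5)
Your proposal is correct and takes essentially the same approach as the paper: both exploit the decomposition of $K$ (equivalently of $G$) into the two Bruhat--Iwahori cells $I$ and $IsI$ to show that a nonzero semi-invariant $f$ must vanish on the small cell and hence be supported on $wN(0)$ modulo $B$, so that the Jacquet--Langlands integral localizes to $N(0)$ and is nonzero, giving injectivity, and both close by matching dimensions against $(\tau_1\times\tau_2)_{N,\psi}\cong\tau_1\otimes\tau_2$ via \cite[Theorem 2.1]{pr00}. The only cosmetic difference is that the paper organizes the nonvanishing of the integral as an induction $f_r(s)=f_{r-1}(s)$ using the matrix identity, while you compute the support of $f(wn)$ in one step; the underlying calculation is identical.
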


Before we begin proving the theorem, we need a couple of lemmas. For an
integer $r$, let 
$N(r)=\begin{pmatrix}1&\mathfrak{p}^{r}_{D}\\0&1\end{pmatrix}$. Note
$\psi|_{N(0)}=\psi_{0}|_{N(0)}$.
\begin{lemma}\label{ssupp}
  Let $f$ be a non-zero element of $(\tau_1\times \tau_2)^{I(1),
    \psi_{0}}$, then we have
  $$\int_{N(0)}\psi^{-1}(n)f(sn)dn=\mathrm{vol}(N(0))f(s)\neq 0.$$
\end{lemma}
\begin{proof}
The first equality is clear because $f(sn)=(nf)(s)=\psi(n)f(s)$ for $n\in N(0)$. Now, the function $f$ is non-zero if and only if $f|_{K}$ is so. We have
  $K=I\sqcup IsI$. Observe that $f(1)=0$ because $\psi|_{N(0)}$ is
  non-trivial. From this, we get that $f(i)=0$, for all 
  $i\in I=\left(
  \begin{smallmatrix}\mathbb{F}_{q^{n}}^{\times}&0\\0&\mathbb{F}_{q^{n}}^{\times}\end{smallmatrix}\right)I(1)$. The double coset
  $IsI$ is equal to the set $(I\cap B)sI(1)$. If $f(s)=0$, then the
  function $f$ is identically zero on the double coset $IsI$, and
  hence on $K$. Thus $f(s)\neq 0$.
\end{proof}
\begin{lemma}\label{nbds}
For any smooth representation $(\pi,V)$ of $N$, and $v\in V$, the image of
$v$ in $V_{N, \psi}$ is non-zero if and only if
$$\int_{N(-r)}\psi^{-1}(n)\pi(n)vdn$$
is non-zero for all $r>>0$. 
\end{lemma}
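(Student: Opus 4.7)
The plan is to deduce this directly from the Jacquet--Langlands lemma quoted in Section 2.3: namely, $v\in V(N,\psi)$ if and only if $\int_{\mathcal{N}}\psi^{-1}(n)\pi(n)v\,dn=0$ for some compact open subgroup $\mathcal{N}$ of $N$. The key structural fact I would exploit is that $\{N(-r)\}_{r\geq 0}$ is a cofinal, exhausting family of compact open subgroups of $N\cong(D,+)$: every compact open subgroup $\mathcal{N}_{0}\subseteq N$, being in particular a compact $\mathfrak{o}_{F}$-submodule of $D$, is contained in $N(-r)$ for all sufficiently large $r$.

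The forward direction is then immediate. If the image of $v$ in $V_{N,\psi}$ is nonzero, then $v\notin V(N,\psi)$, so by the Jacquet--Langlands lemma the integral of $\psi^{-1}(n)\pi(n)v$ over \emph{every} compact open subgroup of $N$ is nonzero; in particular this holds with $\mathcal{N}=N(-r)$ for each $r$, so the asserted condition is satisfied for all $r\gg 0$.

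For the converse I would argue by contrapositive. Suppose $v\in V(N,\psi)$, and pick a compact open subgroup $\mathcal{N}_{0}$ with $\int_{\mathcal{N}_{0}}\psi^{-1}(n)\pi(n)v\,dn=0$. Take $r$ large enough that $\mathcal{N}_{0}\subseteq N(-r)$, and write $N(-r)=\bigsqcup_{i=1}^{k}n_{i}\mathcal{N}_{0}$ as a finite disjoint union of $\mathcal{N}_{0}$-cosets. Using that $N$ is abelian (it is isomorphic to the additive group of $D$) and that $\psi$ is a character, the integral over each coset factors as
\[
\int_{n_{i}\mathcal{N}_{0}}\psi^{-1}(n)\pi(n)v\,dn=\psi^{-1}(n_{i})\pi(n_{i})\int_{\mathcal{N}_{0}}\psi^{-1}(n')\pi(n')v\,dn'=0,
\]
and summing shows that the integral over $N(-r)$ vanishes for every such $r$, contradicting the hypothesis.

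No step here should present a genuine obstacle; the only real care needed is in verifying the cofinality of $\{N(-r)\}$ among compact open subgroups of $N$ and in the coset-decomposition identity, both of which are elementary consequences of the abelian structure of $N$.
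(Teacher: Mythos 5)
Your proof is correct and takes essentially the same approach as the paper: both directions rely on the Bernstein--Zelevinsky (Jacquet--Langlands) lemma, combined with the coset decomposition of $N(-r)$ modulo a smaller compact open subgroup and the cofinality of the filtration $\{N(-r)\}$ in $N$. The paper organizes the argument with both implications by contrapositive, but the underlying computations and lemmas are identical to yours.
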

\begin{proof}
Assume that the image of a vector $v\in V$ in $V_{N, \psi}$ is zero. Then there
exists a compact open subgroup $\mathcal{N}$ such that
$$\int_{\mathcal{N}}\psi^{-1}(n)\pi(n)vdn=0.$$
Since $\{N(-r):r>0\}$ is an increasing filtration of $N$, there exists
an $r$ such that $ \mathcal{N}\subset N(-r)$. Thus,
$$\int_{N(-r)}\psi^{-1}(n)\pi(n)vdn=
\sum_{g\in N(-r)/\mathcal{N}}\psi^{-1}(g)\pi(g)\int_{\mathcal{N}}\psi^{-1}(n)\pi(n)vdn=0,$$
for all $r$ such that $ \mathcal{N}\subset N(-r)$. Conversely, if the
above integral is zero for any $r>0$, then the image of $v$ in
$V_{N, \psi}$ is zero.
\end{proof}
\begin{proof}[Proof of Theorem \ref{main}.]
For
any positive integer $r$ and $u\in \mathfrak{o}_D^\times$, we have
the following matrix identity:
\begin{equation}\label{snidentity}
\begin{pmatrix}0&1\\-1&0\end{pmatrix}
\begin{pmatrix}1&\varpi_D^{-r}u\\0&1\end{pmatrix}=
\begin{pmatrix}0&1\\-1&-\varpi_D^{-r}u\end{pmatrix}=
\begin{pmatrix}-\varpi_D^{-r}u&-1\\0&-\varpi_D^r\end{pmatrix}^{-1}
\begin{pmatrix}1&0\\\varpi_D^r&u\end{pmatrix}.
\end{equation}
Let $f\in (\tau_1\times \tau_2)^{I(1), \psi_{0}}$ be a non-zero
function and $f_r:=\int_{N(-r)}\psi^{-1}(n)\pi(n)fdn$ where $\pi=\tau_{1}\times\tau_{2}$. For $a\in\mathfrak{p}_{D}^{-r}$, let $\overline{a}$ denote its class $a+\mathfrak{p}_{D}^{-r}$ in $\mathfrak{p}_D^{-r}/\mathfrak{p}_D^{-r+1}$. Then
\begin{align*}
  f_{r}(s)=\int_{N(-r)}\psi^{-1}(n)f(sn)dn=
  &\int_{\mathfrak{p}_D^{-r}}\psi^{-1}(y)f\left(s \begin{pmatrix}1&y\\0&1\end{pmatrix}\right)dy\\
  =&\sum_{\overline{a}\in\mathfrak{p}_D^{-r}/\mathfrak{p}_D^{-r+1}}
     \int_{\mathfrak{p}_D^{-r+1}}\psi^{-1}(a+y)f\left(s \begin{pmatrix}1&(a+y)\\0&1\end{pmatrix}\right)dy\\
  =&f_{r-1}(s)+\sum_{\overline{a}\neq\overline{0}}
     \int_{\mathfrak{p}_D^{-r+1}}\psi^{-1}(a+y)f\left(s \begin{pmatrix}1&(a+y)\\0&1\end{pmatrix}\right)dy.
\end{align*}
If $r>0$, then using the identity \eqref{snidentity} and that
$f(i)=0$ for $i\in I$ (cf. the proof of Lemma \ref{ssupp}), we get that
\[\text{$\int_{\mathfrak{p}_D^{-r+1}}
    \psi^{-1}(a+y)f\left(s \begin{pmatrix}1&(a+y)\\0&1\end{pmatrix}\right)dy=0$
    for $\overline{a}\neq\overline{0}$.}\] 
Thus, we obtain $f_{r}(s)=f_{r-1}(s)$ for all $r>0$. By Lemma
\ref{ssupp}, we get that $f_r$ is non-zero for all $r\geq 0$. Hence,
by Lemma \ref{nbds}, the natural map
\begin{equation}\label{natural1}
  (\tau_1\times \tau_2)^{I(1), \psi_{0}}\rightarrow
  (\tau_1\times \tau_2)_{N, \psi}\end{equation}
is injective. As both sides have the same dimension, the map in \eqref{natural1} is an isomorphism. 
\end{proof}

\begin{proposition}\label{semiinvariantsforspst}
  Let $\tau$ be a tamely ramified irreducible representation of
  $D^\times$.  The natural maps
  $${\rm Sp}(\tau)^{I(1), \psi_{0}}\rightarrow {\rm Sp}(\tau)_{N, \psi}$$
  and
  $${\rm St}(\tau)^{I(1), \psi_{0}}\rightarrow {\rm St}(\tau)_{N, \psi}$$
  are isomorphisms.
\end{proposition}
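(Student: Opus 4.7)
The plan is to leverage the two short exact sequences relating $\mathrm{Sp}(\tau)$, $\mathrm{St}(\tau)$ and the principal series $\tau\nu_\tau^{\mp 1/2}\times\tau\nu_\tau^{\pm 1/2}$ together with the isomorphism of Theorem \ref{main}. The idea is entirely formal: apply two exact functors to each short exact sequence and diagram-chase.

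First I would observe that both functors involved are exact. Exactness of the twisted Jacquet functor $V\mapsto V_{N,\psi}$ is standard. Exactness of $V\mapsto V^{I(1),\psi_{0}}$ follows from an averaging argument: given a smooth lift of a $\psi_{0}$-semi-invariant vector in a quotient, one may choose an open normal subgroup $K\subseteq I(1)$ on which $\psi_{0}$ is trivial and which fixes the lift (possible since $I(1)$ is pro-$p$ and $\psi_{0}$ takes values in $p$-power roots of unity), and then average by $\frac{1}{[I(1):K]}\sum_{g\in I(1)/K}\psi_{0}^{-1}(g)\pi(g)(\cdot)$ to produce a $\psi_{0}$-semi-invariant lift. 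Thus applying either functor to a short exact sequence of smooth $G$-representations yields a short exact sequence, and the natural transformation $V^{I(1),\psi_{0}}\hookrightarrow V\twoheadrightarrow V_{N,\psi}$ gives a commutative diagram with exact rows.

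Applied to $0\to\mathrm{Sp}(\tau)\to \tau\nu_\tau^{-1/2}\times\tau\nu_\tau^{1/2}\to\mathrm{St}(\tau)\to 0$, the middle vertical map is an isomorphism by Theorem \ref{main}. A snake-lemma chase then forces the left arrow $\mathrm{Sp}(\tau)^{I(1),\psi_{0}}\to\mathrm{Sp}(\tau)_{N,\psi}$ to be injective and the right arrow $\mathrm{St}(\tau)^{I(1),\psi_{0}}\to\mathrm{St}(\tau)_{N,\psi}$ to be surjective. Repeating the argument with the other sequence $0\to\mathrm{St}(\tau)\to\tau\nu_\tau^{1/2}\times\tau\nu_\tau^{-1/2}\to\mathrm{Sp}(\tau)\to 0$ yields the complementary statements: $\mathrm{St}(\tau)^{I(1),\psi_{0}}\to\mathrm{St}(\tau)_{N,\psi}$ is injective and $\mathrm{Sp}(\tau)^{I(1),\psi_{0}}\to\mathrm{Sp}(\tau)_{N,\psi}$ is surjective. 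Combining both pairs of conclusions gives the desired isomorphisms.

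There is no real obstacle: the only substantive ingredient is Theorem \ref{main}, and the remaining content is the exactness of semi-invariants under a pro-$p$ group together with a routine diagram chase. If one prefers a dimensional argument over a diagram chase, the same conclusion can be reached by noting that injectivity from the first diagram plus additivity of dimensions in the exact rows (total dimension $d_{1}d_{2}$ on both sides) already forces equality.
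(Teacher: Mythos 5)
Your proof is correct and takes essentially the same approach as the paper: apply the two exact functors $(-)^{I(1),\psi_0}$ and $(-)_{N,\psi}$ to the pair of short exact sequences, invoke Theorem \ref{main} to see that the middle vertical maps are isomorphisms, and read off injectivity/surjectivity of the outer maps from each diagram. The only addition beyond what the paper writes is your explicit averaging justification for exactness of $(-)^{I(1),\psi_0}$ (which the paper takes for granted) and the optional dimension-count alternative.
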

\begin{proof}
  We have the following commutative diagrams:
\begin{equation*}
	\begin{tikzcd}
          0 \arrow[r] & {\rm Sp}(\tau)_{N,\psi} \arrow[r] &
          (\tau\nu_{\tau}^{-1/2}\times\tau{\nu}_{\tau}^{1/2})_{N,\psi}
          \arrow[r] & {\rm St}(\tau)_{N,\psi} \arrow[r] & 0 \\ 0
          \arrow[r] & {\rm Sp}(\tau)^{I(1),\psi_{0}}
          \arrow[r]\arrow[u,"f"] &
          (\tau\nu_{\tau}^{-1/2}\times\tau{\nu}_{\tau}^{1/2})^{I(1),\psi_{0}}
          \arrow[r]\arrow[u,"g"] & {\rm St}(\tau)^{I(1),\psi_{0}}
          \arrow[r]\arrow[u,"h"] & 0
	\end{tikzcd}
\end{equation*}
and
\begin{equation*}
	\begin{tikzcd}
          0 \arrow[r] & {\rm St}(\tau)_{N,\psi} \arrow[r] &
          (\tau\nu_{\tau}^{1/2}\times\tau{\nu}_{\tau}^{-1/2})_{N,\psi}
          \arrow[r] & {\rm Sp}(\tau)_{N,\psi} \arrow[r] & 0 \\ 0
          \arrow[r] & {\rm St}(\tau)^{I(1),\psi_{0}}
          \arrow[r]\arrow[u,"h"] &
          (\tau\nu_{\tau}^{1/2}\times\tau{\nu}_{\tau}^{-1/2})^{I(1),\psi_{0}}
          \arrow[r]\arrow[u,"g'"] & {\rm Sp}(\tau)^{I(1),\psi_{0}}
          \arrow[r]\arrow[u,"f"] & 0
	\end{tikzcd}
\end{equation*}
where $f$, $g$, $g'$ and $h$ are the natural maps. Since $g$ and $g'$ are
isomorphisms from Theorem \ref{main}, we get that $f$ is injective and
$h$ is surjective from the first diagram and $f$ is surjective and $h$
is injective from the second diagram. 
  \end{proof}

\begin{corollary}\label{dimformulae}
  Let $\tau=\mathrm{Ind}_{D^{\times}_{m}D(1)}^{D^{\times}}\tilde{\theta}$ be a
  $d$-dimensional tamely ramified irreducible representation of $D^\times$. We
  then have
    $$\dim_{\mathbb{C}}{\rm St}(\tau)_{N, \psi}=\dfrac{d(d-1)}{2}+d \hspace{3mm}\text{and}\hspace{3mm}\dim_{\mathbb{C}}{\rm Sp}(\tau)_{N, \psi}=\dfrac{d(d-1)}{2}.$$
  \end{corollary}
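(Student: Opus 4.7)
The twisted Jacquet functor $(-)_{N,\psi}$ is exact, so applying it to the short exact sequence
\[0\longrightarrow\mathrm{Sp}(\tau)\longrightarrow\tau\nu_{\tau}^{-1/2}\times\tau\nu_{\tau}^{1/2}\longrightarrow\mathrm{St}(\tau)\longrightarrow 0\]
and using the natural isomorphism $(\tau_{1}\times\tau_{2})_{N,\psi}\cong\tau_{1}\otimes\tau_{2}$ recalled in Section 2.4 yields
\[\dim_{\mathbb{C}}\mathrm{Sp}(\tau)_{N,\psi}+\dim_{\mathbb{C}}\mathrm{St}(\tau)_{N,\psi}=d^{2}.\]
Hence only one of the two dimensions in the statement needs to be established, and I will aim for $\dim\mathrm{Sp}(\tau)_{N,\psi}=d(d-1)/2$; the Steinberg dimension will then follow by subtraction.

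By Proposition \ref{semiinvariantsforspst}, this amounts to computing $\dim\mathrm{Sp}(\tau)^{I(1),\psi_{0}}$. To this end I would pass to the finite reductive quotient. Since $\psi_{0}$ is trivial on $K(1)$, there is an inclusion $\mathrm{Sp}(\tau)^{I(1),\psi_{0}}\subseteq\mathrm{Sp}(\tau)^{K(1)}$, and the former coincides with the $\psi_{0}$-semi-invariant subspace for the action of the finite unipotent radical $I(1)/K(1)$ of the standard Borel on the $K/K(1)\cong\mathrm{GL}_{2}(\mathbb{F}_{q^{n}})$-module $\mathrm{Sp}(\tau)^{K(1)}$. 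The explicit description of $\mathrm{Sp}(\tau)^{K(1)}$ as a $\mathrm{GL}_{2}(\mathbb{F}_{q^{n}})$-module is provided by the Minguez-Secherre $k_{\max}$ functor applied to sub-quotients of a depth-zero principal series, and is coupled with the finite-level Whittaker computation through the Moy-Prasad compatibility between $K(1)$-invariants and Jacquet modules. Once $\mathrm{Sp}(\tau)^{K(1)}$ is identified as a specific semisimple submodule of the finite parabolic induction $\mathrm{Ind}_{B(\mathbb{F}_{q^{n}})}^{\mathrm{GL}_{2}(\mathbb{F}_{q^{n}})}(\bar\tau\boxtimes\bar\tau)$---with $\bar\tau$ the residual structure of $\tau|_{\mathfrak{o}_{D}^{\times}}$ through $\mathfrak{o}_{D}^{\times}/D(1)\cong\mathbb{F}_{q^{n}}^{\times}$, expressed in terms of the $d$ Galois conjugates of the underlying tame character $\theta$---the count $\binom{d}{2}=d(d-1)/2$ will follow by a direct finite-group Gelfand-Graev computation, organizing the $d^{2}$ ordered character pairs into Galois orbits.

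The main obstacle is the finite-level identification: specifying exactly which irreducible constituents of the semisimple $\mathrm{GL}_{2}(\mathbb{F}_{q^{n}})$-module $\mathrm{Ind}_{B(\mathbb{F}_{q^{n}})}^{\mathrm{GL}_{2}(\mathbb{F}_{q^{n}})}(\bar\tau\boxtimes\bar\tau)$ descend to $\mathrm{Sp}(\tau)^{K(1)}$ versus $\mathrm{St}(\tau)^{K(1)}$, and checking that the Minguez-Secherre $k_{\max}$ functor really delivers this separation in the required compatible way with Moy-Prasad. Once that decomposition is in hand, the Whittaker count giving $d(d-1)/2$ is routine.
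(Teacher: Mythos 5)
Your plan is essentially the paper's proof. The reduction via Proposition~\ref{semiinvariantsforspst}, the observation that $\psi_0$ is trivial on $K(1)$ so that $\mathrm{Sp}(\tau)^{I(1),\psi_0}$ is computed from the $\mathrm{GL}_2(\mathbb{F}_{q^n})$-module $\mathrm{Sp}(\tau)^{K(1)}$, and the appeal to the Minguez--Secherre $k_{\max}$ functor for that module are exactly the paper's steps; your exactness observation that $\dim\mathrm{Sp}(\tau)_{N,\psi}+\dim\mathrm{St}(\tau)_{N,\psi}=d^2$ lets you trade one computation for the other and is a harmless extra (the paper just computes both directly). The obstacle you flag --- which constituents of the finite induction descend to $\mathrm{Sp}(\tau)^{K(1)}$ versus $\mathrm{St}(\tau)^{K(1)}$ --- is precisely where the paper cites \cite{ms14} and \cite[Lemma 4.5]{ns23} for the explicit answer: as a $K$-module, $\mathrm{Sp}(\tau)^{K(1)}$ is the direct sum of $\mathrm{Ind}_{I}^{K}(\tilde\theta^{q^i}\otimes\tilde\theta^{q^j})$ over pairs $0\le i<j\le d-1$, together with the $d$ one-dimensional constituents $\tilde\theta^{q^i}\circ\det$ (the Speh side takes the character, not the finite Steinberg, from each reducible diagonal induction $\mathrm{Ind}_{I}^{K}(\tilde\theta^{q^i}\otimes\tilde\theta^{q^i})$), while $\mathrm{St}(\tau)^{K(1)}$ has the same off-diagonal pieces but with the $d$ finite Steinbergs $\mathrm{st}(\tilde\theta^{q^i})$ in place of the characters. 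Each off-diagonal induction and each finite Steinberg contributes a one-dimensional $\psi_0$-semi-invariant space, and the one-dimensional characters contribute none, giving $\binom{d}{2}$ and $\binom{d}{2}+d$ respectively --- so the finite Whittaker count is, as you expected, routine once this decomposition is supplied.
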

  \begin{proof}
 From the work of Minguez and Secherre \cite{ms14}, we find that as
    $K$-representations \begin{align*}
      &\mathrm{St}(\tau)^{K(1)}\simeq\bigoplus_{\substack{i,j\in\mathbb{Z}/d\mathbb{Z}\\i\neq j}}\mathrm{Ind}_{I}^{K}(\tilde{\theta}^{q^{i}}\otimes\tilde{\theta}^{q^{j}})\oplus\bigoplus_{i\in\mathbb{Z}/d\mathbb{Z}}\mathrm{st}(\tilde{\theta}^{q^{i}})
        \hspace{2mm}\text{and}\\&\mathrm{Sp}(\tau)^{K(1)}\simeq\bigoplus_{\substack{i,j\in\mathbb{Z}/d\mathbb{Z}\\i\neq j}}\mathrm{Ind}_{I}^{K}(\tilde{\theta}^{q^{i}}\otimes\tilde{\theta}^{q^{j}})\oplus\bigoplus_{i\in\mathbb{Z}/d\mathbb{Z}}\tilde{\theta}^{q^{i}}\circ\mathrm{det}(\overline{\hspace{0.5mm}\cdot\hspace{0.5mm}}),
        \end{align*} where $\mathrm{det}(\overline{\hspace{0.5mm}\cdot\hspace{0.5mm}})$ is the composition of the determinant character of $\mathrm{GL}_{2}(\mathbb{F}_{q^{n}})$ and the natural surjection $K\twoheadrightarrow\mathrm{GL}_{2}(\mathbb{F}_{q^{n}})$, and $\tilde{\theta}^{q^{i}}\circ\mathrm{det}(\overline{\hspace{0.5mm}\cdot\hspace{0.5mm}})$ and $\mathrm{st}(\tilde{\theta}^{q^{i}})$ are the two simple factors of the reducible induction $\mathrm{Ind}_{I}^{K}(\tilde{\theta}^{q^{i}}\otimes\tilde{\theta}^{q^{i}})$ (see \cite[Lemma 4.5]{ns23}).  Hence, \[\dim_{\mathbb{C}}{\rm
                            St}(\tau)^{I(1), \psi_{0}}=\dim_{\mathbb{C}}\mathrm{Hom}_{I(1)}(\psi_{0},
                          \mathrm{St}(\tau)^{K(1)})=\dfrac{d(d-1)}{2}+d\]
    and
    \[\dim_{\mathbb{C}}{\rm Sp}(\tau)^{I(1), \psi_{0}}=
      \dim_{\mathbb{C}}\mathrm{Hom}_{I(1)}(\psi_{0},\mathrm{Sp}(\tau)^{K(1)})=
      \dfrac{d(d-1)}{2}.\]
    The corollary now follows from Proposition \ref{semiinvariantsforspst}. 
  \end{proof}
  \subsection{The $D^\times$-action on the twisted Jacquet module}
Let $\tau=\mathrm{Ind}_{D^{\times}_{m}D(1)}^{D^{\times}}\tilde{\theta}$ be a
  $d$-dimensional tamely ramified irreducible representation of $D^\times$. Using Proposition \ref{semiinvariantsforspst}, we now find the explicit structure of the $D^{\times}$-representation $\mathrm{Sp}(\tau)_{N,\psi}$. The analysis depends on the parity of $d$. 

The representation $\tau$ restricted to the subgroup $\mathbb{F}_{q^{n}}^{\times}$ decomposes as the sum of characters $\bigoplus_{i\in\mathbb{Z}/d\mathbb{Z}}\tilde{\theta}^{q^{i}}$ and the $\varpi_{D}$-action maps  the underlying space of $\tilde{\theta}^{q^{i}}$ to that of $\tilde{\theta}^{q^{i-1}}$. Consider now $\tau\otimes\tau$ as a representation of $B$. Letting $K(1)$ act trivially, one extends the action of $B\cap KD^{\times}$ on $\tau\otimes\tau$ to $ID^{\times}=(B\cap KD^{\times})K(1)$. Then, as a representation of $ID^{\times}$,
 \[\tau\otimes\tau=\bigoplus_{j\in\mathbb{Z}/d\mathbb{Z}}W_{j},\] 
where $W_j$ is an irreducible
representation of $ID^\times$ such that \[\res_{I}W_j=\bigoplus_{i\in\mathbb{Z}/d\mathbb{Z}}\tilde{\theta}^{q^i}\otimes \tilde{\theta}^{q^{i+j}}.\]
So the space of $K(1)$-invariants of the principal series $\tau\nu_\tau^{-1/2}\times
\tau\nu_\tau^{1/2}$ as a $KD^\times$-representation is isomorphic to
$$\ind_{ID^\times}^{KD^\times}(\tau\otimes \tau)=\ind_{ID^\times}^{KD^\times}\left(\oplus_{j\in
	\mathbb{Z}/d\mathbb{Z}}W_j\right). $$
\begin{lemma}\label{oddcaselemma}
  Let $j\in \mathbb{Z}/d\mathbb{Z}$. If $2j\neq 0$, then the
  representation $\ind_{ID^\times}^{KD^\times} W_j$ is irreducible; otherwise
  it has two distinct irreducible subrepresentations $\rho_1$ and $\rho_2$.
  When $j\neq 0$ (and $2j=0$), we have $\res_K\rho_1\simeq \res_K\rho_2$. Moreover,
  $$\ind_{ID^\times}^{KD^\times}W_j\simeq \ind_{ID^\times}^{KD^\times}W_{-j}$$
  for all $2j\neq 0$. 
  \end{lemma}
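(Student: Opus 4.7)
The plan is to compute $\mathrm{End}_{KD^\times}(\ind_{ID^\times}^{KD^\times} W_y)$ via the Mackey--Frobenius formalism. I would first establish the double coset decomposition $ID^\times \backslash KD^\times / ID^\times = \{1, s\}$: since $D^\times$ normalizes both $K$ and $I$ and is absorbed into $ID^\times$ on each side, the double cosets reduce to $I \backslash K / I = \{1, s\}$ via the Iwahori--Bruhat decomposition $K = I \sqcup IsI$. Noting that $sIs^{-1}$ is the opposite Iwahori $I^-$, that $I \cap I^- = T_0 K(1)$, and that $s$ commutes with $\Pi = \mathrm{diag}(\varpi_D,\varpi_D)$, the intersection $H := ID^\times \cap sID^\times s^{-1}$ equals $T_0 K(1) D^\times$. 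Mackey and Frobenius reciprocity then give
\[
\mathrm{End}_{KD^\times}(\ind W_y) \cong \mathrm{End}_{ID^\times}(W_y) \oplus \mathrm{Hom}_H(W_y|_H, {}^s W_y|_H) \cong \mathbb{C} \oplus \mathrm{Hom}_{T_0 D^\times}(W_y, {}^s W_y),
\]
where the last reduction uses that $K(1)$ acts trivially on both $W_y$ and ${}^s W_y$.

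The heart of the proof is a comparison of $T_0$-character supports: $W_y|_{T_0}$ supports $\{\tilde\theta^{q^x}\otimes \tilde\theta^{q^{x+y}}\}_x$, while ${}^s W_y|_{T_0}$ supports the factor-swapped set $\{\tilde\theta^{q^{x+y}}\otimes \tilde\theta^{q^x}\}_x$. These sets are disjoint precisely when $y \not\equiv -y \pmod d$, i.e., $2y \not\equiv 0$, and coincide otherwise. When $2y \neq 0$, disjointness forces the Hom space to vanish, so $\mathrm{End}_{KD^\times}(\ind W_y) = \mathbb{C}$ and $\ind W_y$ is irreducible. When $2y = 0$, the $T_0$-supports coincide and the $\Pi$-actions match (as $s\Pi = \Pi s$), so $W_y \simeq {}^s W_y$ as irreducible $T_0 D^\times$-representations; the Hom space is one-dimensional, $\mathrm{End}_{KD^\times}(\ind W_y) = \mathbb{C}^2$, and $\ind W_y$ decomposes as $\rho_1 \oplus \rho_2$ with two distinct irreducible summands (a repeated summand would yield endomorphism algebra of dimension at least $4$). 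The symmetry $\ind W_y \simeq \ind W_{-y}$ for $2y \neq 0$ is derived by running the same Mackey computation on $\mathrm{Hom}_{KD^\times}(\ind W_y, \ind W_{-y})$: the first summand vanishes and the second is one-dimensional (since ${}^s W_{-y}$ has the same $T_0$-support and $\Pi$-action as $W_y$), producing a nonzero Hom between two irreducibles and hence an isomorphism.

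For the $K$-restriction claim $\res_K \rho_1 \simeq \res_K \rho_2$ in the $2y = 0$ case, note that $\res_K \ind W_y = \bigoplus_x \ind_I^K \chi_{x,x+y}$. When $y \neq 0$ with $2y \equiv 0$, the pairing $\chi_{x,x+y} \leftrightarrow \chi_{x+y,x}$ via $W$-conjugation places each occurring $K$-isoclass in the restriction with multiplicity $2$; the two $\rho_i$ are then cut out by the distinct eigenvalues of the nontrivial intertwiner from the one-dimensional Hom space above, with each eigenline picking up one copy of every $K$-isoclass and thereby yielding isomorphic $K$-restrictions. The main obstacle is this last verification, requiring careful tracking of the intertwiner's action on the multiplicity-two isotypic spaces to confirm that it acts as a non-scalar automorphism producing a clean $1{:}1$ split, rather than concentrating on one $W$-conjugate branch.
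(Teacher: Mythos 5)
Your approach is essentially the paper's: the paper also applies Mackey to write
$\ho_{KD^\times}(\ind_{ID^\times}^{KD^\times}W_{y}, \ind_{ID^\times}^{KD^\times}W_{y'}) = \ho_{T_0D^\times}(W_{y}, W_{y'})\oplus \ho_{T_0D^\times}(W_y, W_{y'}^s)$, then uses that the $W_y$ are pairwise distinct irreducibles of $T_0D^\times$ together with $W_y^s = W_{-y}$, and declares ``the lemma follows.'' You have supplied exactly the bookkeeping the paper omits --- the double coset set $\{1,s\}$, the intersection $ID^\times\cap sID^\times s^{-1} = T_0K(1)D^\times$, the triviality of $K(1)$, and the comparison of $T_0$-character supports --- so on the irreducibility dichotomy and on $\ind W_y\simeq\ind W_{-y}$ your argument is a correct, more detailed version of the same proof.

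On the last clause, $\res_K\rho_1\simeq\res_K\rho_2$, you correctly sense trouble, but the difficulty is sharper than a ``tracking'' issue. Your argument (pairing $\chi_{x,x+y}\leftrightarrow\chi_{x+y,x}$ to get multiplicity two) only applies when $y\neq 0$, and for $y=0$ the statement is in fact \emph{false}: $\res_K\ind_{ID^\times}^{KD^\times}W_0 = \bigoplus_x\ind_I^K(\tilde\theta^{q^x}\otimes\tilde\theta^{q^x})$ is a multiplicity-free sum of $2d$ distinct $K$-irreducibles, namely $\bigoplus_x \tilde\theta^{q^x}\circ\det \oplus \bigoplus_x \mathrm{st}(\tilde\theta^{q^x})$, and the two $KD^\times$-summands carry disjoint $K$-types (as the paper's own Proposition following the lemma exhibits: $\res_K V = \bigoplus_i\tilde\theta^{q^i}\circ\det$ for one piece, forcing the other to be $\bigoplus_i\mathrm{st}(\tilde\theta^{q^i})$). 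Since for odd $d$ the case $2y=0$ means $y=0$, this is the only relevant instance, and there $\res_K\rho_1\not\simeq\res_K\rho_2$. The paper's one-line proof never addresses this clause either, and it is not actually used downstream (indeed the non-isomorphism is what lets the Proposition single out $V$); so this is best read as a misstatement in the lemma rather than a gap you were obliged to fill. Your worry was well placed --- rather than trying to finish that verification, the right move is to observe that the claim fails at $y=0$.
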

\begin{proof}
  Applying the Mackey decomposition, we get that
  \[{\rm Hom}_{KD^\times}(\ind_{ID^\times}^{KD^\times}W_{j},
    \ind_{ID^\times}^{KD^\times}W_{j'}) ={\rm
      Hom}_{T_0D^\times}(W_{j}, W_{j'})\oplus {\rm
      Hom}_{T_0D^\times}(W_j, W_{j'}^s),\] where $W^{s}_{j}$ is a $T_{0}D^{\times}$-representation on the space $W_{j}$ equipped with the $T_{0}D^{\times}$-action conjugated by $s=\left(
     \begin{smallmatrix}0&1\\-1&0\end{smallmatrix}\right)$. For
  $j\in \mathbb{Z}/d\mathbb{Z}$, the representations $W_j$ of
  $T_0D^\times$ are distinct and irreducible. As $W_j^s$ is
  equal to $W_{-j}$, the lemma follows.
\end{proof}

Let $\{e_i: i\in\mathbb{Z}/d\mathbb{Z}\}$ be a basis of
$\tau$ consisting of functions $e_{i}:D^{\times}\rightarrow\mathbb{C}$ such that $\text{supp}(e_{i})=\mathfrak{o}_D^{\times}\rtimes\varpi_{D}^{d\mathbb{Z}}\varpi_{D}^{i}$ and $e_{i}(\varpi_{D}^{i})=1$. The (diagonal) character of $\mathfrak{o}_D^\times$ on the $1$-dimensional space spanned by the vector $e_{i}\otimes e_{i+j}$ is $(\tilde{\theta}^{1+q^{j}})^{q^{i}}$. 
For $j\in\mathbb{Z}/d\mathbb{Z}$, $\res_{D^\times}W_{j}$ is a representation of $D^{\times}$ such that \[\res_{\mathfrak{o}_D^\times}W_{j}\simeq\bigoplus_{i\in\mathbb{Z}/d\mathbb{Z}}(\tilde{\theta}^{1+q^{j}})^{q^i}\] with $\varpi_{D}$ mapping $e_{i}\otimes e_{i+j}$ to $e_{i-1}\otimes e_{i-1+j}$ for $i\neq 0$ and $e_{0}\otimes e_{j}$ to $\tilde{\theta}(\varpi_{D}^{2d})e_{-1}\otimes e_{-1+j}$.
\subsubsection{$d=\mathrm{dim}_{\mathbb{C}}(\tau)$ is odd}
\begin{proposition}\label{oddcaseprop}
  If $d=\mathrm{dim}_{\mathbb{C}}(\tau)$ is odd, then 
  the $D^{\times}$-representation ${\rm Sp}(\tau)_{N, \psi}$ is
  isomorphic to $\bigwedge^2\tau$. 
\end{proposition}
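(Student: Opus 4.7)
The plan is to rewrite $\mathrm{Sp}(\tau)_{N,\psi}$ as $\mathrm{Sp}(\tau)^{I(1),\psi_{0}}$ via Proposition \ref{semiinvariantsforspst} and to identify the latter inside the $KD^{\times}$-decomposition $V^{K(1)}=\bigoplus_{y\in\mathbb{Z}/d\mathbb{Z}}\ind_{ID^{\times}}^{KD^{\times}}W_{y}$ introduced before Lemma \ref{oddcaselemma}, where $V=\tau\nu_{\tau}^{-1/2}\times\tau\nu_{\tau}^{1/2}$. First I would record the $D^{\times}$-structure of the summands: choosing the basis $\{e_{i}\otimes e_{i+y}\}_{i\in\mathbb{Z}/d\mathbb{Z}}$ of $W_{y}\subset\tau\otimes\tau$ on which $\varpi_{D}$ shifts $i\mapsto i+1$ cyclically, one reads off $W_{y}\cong\ind_{D_{m}^{\times}D(1)}^{D^{\times}}\tilde\theta^{1+q^{y}}$. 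The Weyl element $s$ swaps tensor factors, hence exchanges $W_{y}$ with $W_{-y}$ and fixes $W_{0}$ pointwise, so in the odd case
\[
  \wedge^{2}\tau\;\cong\;\bigoplus_{y=1}^{(d-1)/2}W_{y},
\]
with one representative per pair $\{y,-y\}$.

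Next I would locate the $KD^{\times}$-constituents of $\mathrm{Sp}(\tau)^{K(1)}$. By Lemma \ref{oddcaselemma}, $\ind\,W_{y}$ is irreducible for $y\neq 0$ and $\ind\,W_{y}\cong\ind\,W_{-y}$; matching with the $K$-structure of $\mathrm{Sp}(\tau)^{K(1)}$ displayed in the proof of Corollary \ref{dimformulae} forces $\mathrm{Sp}(\tau)^{K(1)}$ to contain exactly one copy of $\ind\,W_{y}$ for each pair $\{y,-y\}$ with $y\neq 0$. For $y=0$, the $KD^{\times}$-constituent of $\mathrm{Sp}(\tau)^{K(1)}$ restricts to $K$ as $\bigoplus_{x}\tilde\theta^{q^{x}}\circ\det$; this $K$-sum is itself a $KD^{\times}$-stable irreducible subspace (stability because $\varpi_{D}$ permutes the indices cyclically via Frobenius, irreducibility by a short Clifford argument).

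Finally I would pass to $\psi_{0}$-semi-invariants. Since $\det$ is trivial on $I(1)$ whereas $\psi_{0}$ is not, the $y=0$ contribution vanishes, i.e.\ $(\bigoplus_{x}\tilde\theta^{q^{x}}\circ\det)^{I(1),\psi_{0}}=0$. For $y\neq 0$ the key claim is that evaluation at $s$ defines a $D^{\times}$-equivariant isomorphism
\[
  (\ind_{ID^{\times}}^{KD^{\times}}W_{y})^{I(1),\psi_{0}}\;\xrightarrow{\;\sim\;}\;W_{y},\qquad f\longmapsto f(s).
\]
Using the right-coset representatives $\{1\}\cup\{s u_{c}:c\in k_{D}\}$ of $ID^{\times}\backslash KD^{\times}$, the $I(1)$-semi-invariance forces $f(1)=0$ (because $W_{y}|_{I(1)}$ is trivial while $\psi_{0}|_{I(1)}$ is not) and $f(s u_{c})=\psi(c) f(s)$; conversely any value $v\in W_{y}$ at $s$ extends consistently since $K(1)$ acts trivially on $W_{y}$ and $\psi_{0}|_{K(1)}=1$. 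The $D^{\times}$-equivariance then follows because the diagonal element $\mathrm{diag}(u,u)$ commutes with $s$, so
\[
  (u\cdot f)(s)\;=\;f(su)\;=\;f(us)\;=\;u\cdot f(s)
\]
by the left $ID^{\times}$-equivariance of $f$. Summing over pairs yields $\mathrm{Sp}(\tau)^{I(1),\psi_{0}}\cong\bigoplus_{y=1}^{(d-1)/2}W_{y}\cong\wedge^{2}\tau$, completing the proof via Proposition \ref{semiinvariantsforspst}.

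The main obstacle is the $D^{\times}$-equivariance of the Whittaker-type isomorphism $(\ind\,W_{y})^{I(1),\psi_{0}}\cong W_{y}$: the underlying vector-space identification is a short coset computation in $K/I\cong\mathbb{P}^{1}(k_{D})$, but the equivariance hinges on the (a priori non-obvious) fact that the diagonally embedded $D^{\times}$ commutes with the Weyl element $s$, and it is exactly this commutation that pins down the identification with the $\wedge^{2}$ summand rather than with some other $D^{\times}$-subrepresentation of $\tau\otimes\tau$.
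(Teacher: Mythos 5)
Your argument is correct and follows essentially the same route as the paper: pass to $\psi_{0}$-semi-invariants via Proposition \ref{semiinvariantsforspst}, decompose $\mathrm{Sp}(\tau)^{K(1)}$ as a $KD^{\times}$-module using Lemma \ref{oddcaselemma} together with the Minguez--Secherre $K$-structure from Corollary \ref{dimformulae}, kill the $y=0$ piece upon taking $\psi_{0}$-semi-invariants, and identify the remainder with $\wedge^{2}\tau$. Your evaluation-at-$s$ computation just makes explicit the step the paper states without elaboration, namely that $(\mathrm{Ind}_{ID^{\times}}^{KD^{\times}}W_{y})^{I(1),\psi_{0}}\cong\mathrm{Res}_{D^{\times}}W_{y}$, while the paper instead closes with an explicit basis map $e_{i}\otimes e_{i+y}\mapsto e_{i}\otimes e_{i+y}-e_{i+y}\otimes e_{i}$ onto $\wedge^{2}\tau$.
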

\begin{proof}
Let $S\subseteq\mathbb{Z}/d\mathbb{Z}$ be the subset consisting of elements $j$'s defined by the condition that $2j\neq 0$ and $j\in S$ if and only if $-j\notin S$. By the proof of Corollary \ref{dimformulae} and Lemma \ref{oddcaselemma}, we have as $KD^{\times}$-representations,
\[{\rm Sp}(\tau)^{K(1)}\simeq V\oplus \bigoplus_{j\in S}\ind_{ID^\times}^{KD^\times}W_j,\] where $\res_{K}V=\bigoplus_{i\in\mathbb{Z}/d\mathbb{Z}}\tilde{\theta}^{q^{i}}\circ\mathrm{det}(\overline{\hspace{0.5mm}\cdot\hspace{0.5mm}})
$. Considering the $\psi_{0}$-semi-invariants for the action of $I(1)$, we get that
\[{\rm Sp}(\tau)^{I(1), \psi_{0}}\simeq \bigoplus_{j\in S}{\rm
    Res}_{D^\times} W_j.\] The set
$\{e_i\otimes e_j-e_j\otimes e_i:\text{unordered pairs}\hspace{2mm} (i,j)\in(\mathbb{Z}/d\mathbb{Z})^{2}\hspace{2mm}\text{with}\hspace{2mm} i\neq j\}$ is a basis for
$\bigwedge^2(\tau)$, whereas the space $W_j$ is spanned by vectors
$e_i\otimes e_{i+j}, i\in\mathbb{Z}/d\mathbb{Z}$. The map
\[e_i\otimes e_{i+j}\mapsto e_i\otimes e_{i+j}-e_{i+j}\otimes e_i\]
defines an isomorphism of $\bigoplus_{j\in S}\res_{D^\times}W_j$ with $\bigwedge^2\tau$.
\end{proof}
\subsubsection{$d=\mathrm{dim}_{\mathbb{C}}(\tau)$ is even} 
\hspace{2mm}

As in the proof of Proposition \ref{oddcaseprop}, we have 
\begin{equation}\label{sptjmdescription}
	\text{${\rm Sp}(\tau)_{N,\psi}\simeq{\rm Sp}(\tau)^{I(1), \psi_{0}}\simeq X\oplus\bigoplus_{j\in S}{\rm
			Res}_{D^\times} W_j$}
\end{equation} where $X$ is a subrepresentation of $\res_{D^{\times}}W_{\frac{d}{2}}$ such that \[\res_{\mathfrak{o}_D^\times}X\simeq\bigoplus_{i=0}^{\frac{d}{2}-1}(\tilde{\theta}^{1+q^{\frac{d}{2}}})^{q^i}.\]
Suppose $(\tilde{\theta}^{1+q^{\frac{d}{2}}})^{q^{k}}=\tilde{\theta}^{1+q^{\frac{d}{2}}}$ with $kk'=\frac{d}{2}$. Then by Frobenius reciprocity, the $D^{\times}$-representation $X$ is a sum of $k'$ copies of the induction of $\tilde{\theta}^{1+q^{\frac{d}{2}}}$ from the index $k$ subgroup $\mathfrak{o}_D^{\times}\rtimes\varpi_{D}^{k\mathbb{Z}}$, that is,
\[X\simeq\ind_{\mathfrak{o}_D^{\times}\rtimes\varpi_{D}^{k\mathbb{Z}}}^{D^{\times}}(\tilde{\theta}^{1+q^{\frac{d}{2}}})\oplus\ldots\oplus\ind_{\mathfrak{o}_D^{\times}\rtimes\varpi_{D}^{k\mathbb{Z}}}^{D^{\times}}(\tilde{\theta}^{1+q^{\frac{d}{2}}}).\] When $k=1$, the underlying space of the character $\tilde{\theta}^{1+q^{\frac{d}{2}}}$ is itself stable under the action of $\varpi_{D}$ and one needs to analyze this action. We do this for $d=2$ in the remaining part of this subsection.

Let $d=\mathrm{dim}_{\mathbb{C}}(\tau)=2$ from now on. As $d=2$, the index of $D$ is $n=2m$ and the set $S$ is empty. From \eqref{sptjmdescription} or by Corollary \ref{dimformulae}, we know that $\mathrm{Sp}(\tau)_{N,\psi}$ is a character of $D^{\times}$. The following theorem precisely describes this character generalizing Theorem \ref{dpthm}. 
  \begin{theorem}\label{explicitstructure}
    The $D^\times$-representation ${\rm Sp}(\tau)_{N, \psi}$ is the character 
    $(\theta\circ {\rm Nr}_{D/F})\mu_{(-1)^{m+1}}$. 
  \end{theorem}
  \begin{proof}
  	As $d=2$, the $KD^{\times}$-representation ${\rm Sp}(\tau)^{K(1)}$ is a sum of two representations $V$ and $\rho$ such that $\res_{K}V\simeq\left(\tilde{\theta}\circ\mathrm{det}(\overline{\hspace{0.5mm}\cdot\hspace{0.5mm}})\right)\oplus\left(\tilde{\theta}^{q}\circ\mathrm{det}(\overline{\hspace{0.5mm}\cdot\hspace{0.5mm}})\right)$ and $\res_{K}\rho\simeq\ind_{I}^{K}(\tilde{\theta}^{q}\otimes\tilde{\theta})$ (see the proof of Corollary \ref{dimformulae} and Lemma \ref{oddcaselemma}).
    Let $f$ be a non-zero function in $\mathrm{Sp}(\tau)^{K(1)}$ such
    that $kf=\tilde{\theta}({\rm det}(\overline{k}))f$ for all $k\in K$ and let
   $$t:=\begin{pmatrix}\varpi_D&0\\0&1\end{pmatrix}.$$
   Note that $tit^{-1}\in K$ for $i\in I$ and thus
   $it^{-1}f=\tilde{\theta}({\rm det}(\overline{tit^{-1}}))t^{-1}f$, which
   implies that
   $t^{-1}f\in {\rm Sp}(\tau)^{I(1)}$ and $I$ acts on $t^{-1}f$ via the character $\tilde{\theta}^{q}\otimes\tilde{\theta}$. It follows that the $K$-representation
   $\langle K\cdot t^{-1}f\rangle$ generated by $t^{-1}f$ is $\res_{K}\rho$. We are interested in the $D^{\times}$-representation
   on the space $\langle K\cdot t^{-1}f\rangle^{I(1),\psi_{0}}$. 
   
   The
   Frobenius reciprocity map
   $$\Phi: {\rm Ind}_{I}^K(\tilde{\theta}^q\otimes \tilde{\theta})\rightarrow
   \langle K\cdot t^{-1}f\rangle$$ is an isomorphism of $K$-representations where   \[\Phi(\varphi)=\sum_{k\in\{1,sn_{x}\}}\varphi(k^{-1})kt^{-1}f.\] Here,
   $\{1,sn_{x}=\left(
     \begin{smallmatrix}0&1\\-1&0\end{smallmatrix}\right)
   \left(\begin{smallmatrix}1&[x]\\0&1\end{smallmatrix}\right):x\in\mathbb{F}_{q^{2m}}\}$
   is a set of representatives for $I\backslash K$. Let
   $\mathbb{1}_{I}\in{\rm Ind}_{I}^K(\tilde{\theta}^q\otimes \tilde{\theta})$ be the
   function such that $\Phi(\mathbb{1}_{I})=t^{-1}f$. It is the function supported on $I$ mapping $1$ (of $K$) to $1$.
   Using the operator
   \[T(\varphi)(k)=\dfrac{\theta(-1)^{m+1}\theta(\varpi_{F})}{q^m}
     \sum_{y\in\mathbb{F}_{q^{2m}}}\varphi(sn_{y}\varpi_{D}k\varpi_{D}^{-1})\]
   we make ${\rm Ind}_{I}^K(\tilde{\theta}^q\otimes \tilde{\theta})$ into a
   representation of $KD^\times$ such that $\varpi_D$ acts by $T$. We
   claim that $\Phi$ is then an isomorphism of
   $KD^{\times}$-representations. Indeed, we note that
   $T^{2m}=\theta(\varpi_{F})^{2}{\rm Id}$ and $T$ corresponds to an
   intertwining operator in
   $\mathrm{Hom}_{K}({\rm Ind}_{I}^K(\tilde{\theta}^q\otimes \tilde{\theta}),{\rm
     Ind}_{I}^K(\tilde{\theta}\otimes \tilde{\theta}^{q}))\simeq\mathbb{C}$. As
   $\varpi_{D}^{2m}$ acts on $\langle K\cdot t^{-1}f\rangle$ by the
   scalar-multiplication by $\theta(\varpi_{F})^{2}$,
   there exists a scalar (in fact, an $m$-th root of unity) $\epsilon$ such that
\[\Phi(T(\mathbb{1}_I))= \epsilon\varpi_Dt^{-1}f.\]
     Expanding the left-hand side of the above, we find that
     \begin{align*}
       \Phi(T(\mathbb{1}_I))=
       &\sum_{k\in\{1,sn_{x}\}}T(\mathbb{1}_{I})(k^{-1})kt^{-1}f\\
       =&\dfrac{\theta(-1)^{m+1}\theta(\varpi_{F})}{q^m}
          \sum_{k\in\{1,sn_{x}\}}
          \sum_{y\in\mathbb{F}_{q^{2m}}}\mathbb{1}_{I}(sn_{y}\varpi_{D}k^{-1}
          \varpi_{D}^{-1})kt^{-1}f\\
       =&\dfrac{\theta(-1)^{m+1}\theta(\varpi_{F})}{q^m}
          \sum_{x\in\mathbb{F}_{q^{2m}}}
          \sum_{y\in\mathbb{F}_{q^{2m}}}\mathbb{1}_{I}(sn_{y}\varpi_{D}n_{x}s
          \varpi_{D}^{-1})sn_{-x}t^{-1}f\\
       =&\dfrac{\theta(-1)^{m+1}\theta(\varpi_{F})}{q^m}
          \sum_{x\in\mathbb{F}_{q^{2m}}}
          \sum_{y\in\mathbb{F}_{q^{2m}}}\mathbb{1}_{I}(sn_{y+x^{q}}s)st^{-1}f\\
       =&\dfrac{\theta(-1)^{m+1}\theta(\varpi_{F})}{q^m}
          \sum_{x\in\mathbb{F}_{q^{2m}}}
          st^{-1}f
      =\theta(-1)^{m+1}\theta(\varpi_{F})q^{m}st^{-1}f.
     \end{align*}
     Thus, we have
     $
     \theta(-1)^{m+1}\theta(\varpi_{F})q^{m}st^{-1}f
     =\epsilon\varpi_Dt^{-1}f$. As $s\in K$ and $\mathrm{det}(\overline{s})=1$, $sf=f$. Hence $
     \theta(-1)^{m+1}\theta(\varpi_{F})q^{m}st^{-1}sf
     =\epsilon\varpi_Dt^{-1}f$. Evaluating
     both sides on $1$, we obtain 
     \[\theta(-1)^{m+1}\theta(\varpi_{F})q^{m}f(st^{-1}s)=
         \epsilon f(\varpi_Dt^{-1}).\] Thus
$$\theta(-1)^{m+1}\theta(\varpi_{F})q^{m}
f\left(\begin{pmatrix}1&0\\0&\varpi_D^{-1}
       \end{pmatrix}\right)
     = \epsilon
     f\left(\begin{pmatrix}1&0\\0&\varpi_D\end{pmatrix}\right).$$
Using that $f\in \tau\nu_{\tau}^{-1/2}\times \tau\nu_{\tau}^{1/2}=\ind_{B}^{G}(\tau|\cdot|^{1/4}\otimes\tau|\cdot|^{-1/4})$, we get
     $$\theta(-1)^{m+1}\theta(\varpi_{F})q^{m}
     (\mathrm{Id}\otimes \tau(\varpi_D^{-1})|\varpi_D|^{1/4})f(1)=
     \epsilon(\mathrm{Id}\otimes \tau(\varpi_D)|\varpi_D|^{-1/4})f(1),$$ and
     using $|\varpi_{D}|=q^{-2m}$ and
     $\tau(\varpi_{D}^{-1})
     =\tau(\varpi_D)\theta(-1)^{m+1}\theta(\varpi_{F})^{-1}$,
     we conclude
      $$ q^{m/2}(\mathrm{Id}\otimes \tau(\varpi_D))f(1)=
      \epsilon q^{m/2}(\mathrm{Id}\otimes \tau(\varpi_D))f(1).$$ Hence,
      $\epsilon=1$ and
      \[\Phi(\varpi_{D}\mathbb{1}_{I})
        =\Phi(T(\mathbb{1}_{I}))=\varpi_{D}t^{-1}f=\varpi_{D}\Phi(\mathbb{1}_{I}).\]
      Thus the $KD^\times$-representation
      $\langle K\cdot t^{-1}f\rangle$ is isomorphic to
      ${\rm Ind}_{I}^K(\tilde{\theta}^{q}\otimes \tilde{\theta})$ with the $\varpi_{D}$-action on the latter given by $T$. 
      
      By \cite[Proposition 2.0.10]{gar14}, the
      $D^\times$-representation on the space
      $\langle K\cdot t^{-1}f\rangle^{I(1), \psi_0}$ is isomorphic to
      $\tilde{\theta}^{q+1}\mu_c$, where
      \begin{equation*}
        c=\dfrac{\theta(-1)^{m+1}\theta(\varpi_{F})}{q^m}
        G(\tilde{\theta}^{q-1}, \psi_0),
      \end{equation*} and $G(\tilde{\theta}^{q-1},\psi_{0})=\sum_{x\in\mathbb{F}_{q^{2m}}}\tilde{\theta}^{q-1}(x)\psi_{0}(x)$ is the Gauss sum. On the underlying space of the character $\tilde{\theta}^{q+1}\mu_{c}$, $\mathfrak{o}_{D}^{\times}$ acts via the character $\tilde{\theta}^{q+1}$ and $\varpi_{D}$ acts as the scalar-multiplication by $c$. To compute the constant $c$, we need to compute the Gauss sum. Note that, in the Gauss sum, $\psi_{0}$ is viewed as a non-trivial additive character on $\mathbb{F}_{q^{2m}}$ factoring as $\psi_{\mathbb{F}_{q}}\circ\mathrm{Tr}_{\mathbb{F}_{q^{2m}}/\mathbb{F}_{q}}$ where $\psi_{\mathbb{F}_{q}}=\psi_{F}|_{\mathfrak{o}_{F}}$. By Hasse-Davenport lifting relation, \[G(\tilde{\theta}^{q-1},\psi_{0})=(-1)^{m+1}G(\theta^{q-1},\psi_{\mathbb{F}_{q}}\circ\mathrm{Tr}_{\mathbb{F}_{q^{2}}/\mathbb{F}_{q}})^{m}.\] To compute $G(\theta^{q-1},\psi_{\mathbb{F}_{q}}\circ\mathrm{Tr}_{\mathbb{F}_{q^{2}}/\mathbb{F}_{q}})=\sum_{x\in\mathbb{F}_{q^{2}}}\theta^{q-1}(x)\psi_{\mathbb{F}_{q}}(\mathrm{Tr}_{\mathbb{F}_{q^{2}}/\mathbb{F}_{q}}(x))$, let us fix a set $\{x_i\}$ of coset
    representatives for $\mathbb{F}_{q^2}^\times/\mathbb{F}_q^\times$.
    We abbreviate ${\rm Tr}_{\mathbb{F}_{q^2}/ \mathbb{F}_q}$ as
    ${\rm Tr}$. Then
    \begin{align*}
      G(\theta^{q-1},\psi_{\mathbb{F}_{q}}\circ\mathrm{Tr})=&\sum_{x_i}\sum_{y\in
                             \mathbb{F}_q^\times}\theta^{q-1}(x_iy)\psi_{\mathbb{F}_{q}}(\mathrm{Tr}(x_iy))\\
      =&\sum_{x_i}\sum_{y\in
                             \mathbb{F}_q^\times}\theta^{q-1}(x_i)\psi_{\mathbb{F}_{q}}({\rm
         Tr}(x_i)y)\\
      =&\sum_{x_i, {\rm Tr}(x_i)=0}(q-1)\theta^{q-1}(x_i)+
         \sum_{x_i, {\rm Tr}(x_i)\neq 0}-\theta^{q-1}(x_i)\\
       =&\sum_{x_i, {\rm Tr}(x_i)=0}q\theta^{q-1}(x_i)-
          \sum_{x_i}\theta^{q-1}(x_i)\\
          =&\sum_{x_i, {\rm Tr}(x_i)=0}q\theta^{q-1}(x_i). 
      \end{align*}
      Note that if $\mathrm{Tr}(x)=\mathrm{Tr}(y)=0$ for some
      $x,y\in\mathbb{F}_{q^2}^{\times}$ then $x$ and $y$ belong to the
      same coset in
      $\mathbb{F}_{q^2}^{\times}/\mathbb{F}_{q}^{\times}$, i.e.,
      $\frac{x}{y}\in\mathbb{F}_{q}^{\times}$. This is clear because
      if $\mathrm{Tr}(x)=\mathrm{Tr}(y)=0$ then
      $(\frac{x}{y})^{q-1}=\frac{x^{q}y}{x
        y^{q}}=\frac{-xy}{-xy}=1$. There always exists an element
      $x_{0}\in\mathbb{F}_{q^2}^{\times}$ with $\mathrm{Tr}(x_{0})=0$: $x_0=1$ if $p=2$; otherwise
      for $\mathbb{F}_{q^2}^{\times}=\langle\alpha\rangle$,
      $x_{0}=\alpha^{\frac{q+1}{2}}$. Hence,
      $\mathbb{F}_{q^2}^{\times}/\mathbb{F}_{q}^{\times}$ has a unique
      coset of trace $0$ elements. Therefore
      \[G(\theta^{q-1},\psi_{\mathbb{F}_{q}}\circ\mathrm{Tr})=q\theta(x_{0}^{q}x_{0}^{-1})
        =q\theta(-x_{0}x_{0}^{-1})=q\theta(-1).\]
      Thus, \[G(\tilde{\theta}^{q-1},\psi_{0})=(-1)^{m+1}q^{m}\theta(-1)^{m} \hspace{2mm} \mathrm{and} \hspace{2mm} c=(-1)^{m+1}\theta(-\varpi_{F}).\] 
      
 It now remains to show that $\tilde{\theta}^{q+1}\mu_{c}$ is the same as the character $\theta\circ{\rm Nr}_{D/F}$ multiplied with the unramified character $\mu_{(-1)^{m+1}}$. To see this, note that \[c=(-1)^{m+1}\theta(-\varpi_{F})=(-1)^{m+1}\theta((-1)^{2m+1}\varpi_{F})=(-1)^{m+1}(\theta\circ{\rm Nr}_{D/F})(\varpi_{D}),\] and for $x\in \mathbb{F}_{q^{2m}}^{\times}$ \begin{align*}
 	\tilde{\theta}^{q+1}(x)=\tilde{\theta}(x)^{q+1}&=(\theta\circ{\rm Nr}_{\mathbb{F}_{q^{2m}}/\mathbb{F}_{q^{2}}})(x)^{q+1}\\&=\theta(x^{1+q^{2}+\ldots+q^{2(m-1)}})^{q+1}\\&=\theta(x^{1+q+q^{2}+\ldots+q^{2m-1}})\\&=(\theta\circ{\rm Nr}_{\mathbb{F}_{q^{2m}}/\mathbb{F}_{q}})(x)=(\theta\circ{\rm Nr}_{D/F})(x).
 \end{align*} Hence $\tilde{\theta}^{q+1}\mu_{c}=(\theta\circ{\rm Nr}_{D/F})\mu_{(-1)^{m+1}}$.
       \end{proof}
       
\begin{remark}
 Note that $\omega_{\tau}=\theta^{m}$. Thus the above theorem recovers Theorem \ref{dpthm} in the case of quaternionic division algebra (i.e., $n=2$ and $m=1$).  
\end{remark}
\begin{remark}
We remark that in contrast with odd $d$, for $d=2$, the above theorem implies that the $D^{\times}$-representation $\mathrm{Sp}(\tau)_{N,\psi}$ is isomorphic to $\bigwedge^{2}(\tau)$ if and only if $\theta(-1)^{m}=\omega_{\tau}(-1)=(-1)^{m}=(-1)^{\frac{n}{2}}$. In particular, for the quaternionic division algebra $D$ (i.e. $n=2$), the twisted Jacquet module of $\mathrm{Sp}(\tau)$ is the exterior square representation $\bigwedge^{2}(\tau)$ if and only if the central character of $\tau$ is odd.  
\end{remark}

\bibliography{tjmbib} 
\bibliographystyle{amsplain}
\end{document}